\theoremstyle{plain}
\newtheorem{theorem}{Theorem}
\newtheorem{proposition}{Proposition}
\newtheorem{lemma}{Lemma}
\theoremstyle{definition}
\newtheorem{example}{Example}
\newtheorem{remark}{Remark}
\newtheorem{question}{Question}
\newcommand{\enm}[1]{\ensuremath{#1}}          %
\newcommand{\cal}[1]{\mathcal{#1}}
\newcommand{\NN}{\enm{\mathbb{N}}}
\newcommand{\ZZ}{\enm{\mathbb{Z}}}
\newcommand{\PP}{\enm{\mathbb{P}}}
\newcommand{\Ii}{\enm{\cal{I}}}
\newcommand{\Ll}{\enm{\cal{L}}}
\newcommand{\Oo}{\enm{\cal{O}}}
\renewcommand{\phi}{\varphi}
\renewcommand{\theta}{\vartheta}
\renewcommand{\epsilon}{\varepsilon}
\renewcommand{\to}[1][]{\xrightarrow{\ #1\ }}
\newcommand{\old}[1]{}
\date{}
\begin{document}

\title[curves]
{On the Hilbert function of intersections of a hypersurface with general reducible curves}
\author{Edoardo Ballico}
\address{Dept. of Mathematics\\
 University of Trento\\
38123 Povo (TN), Italy}
\email{ballico@science.unitn.it}
\thanks{The author was partially supported by MIUR and GNSAGA of INdAM (Italy).}
\subjclass[2010]{14H50}
\keywords{curves in projective spaces; lines; Hilbert function; union of lines}

\begin{abstract}
Let $W\subset \PP^n$, $n\ge 3$, be a degree $k$ hypersurface. Consider a ``~general ~'' reducible, but connected,  curve
$Y\subset \PP^n$, for instance a sufficiently general connected and nodal union of lines with  $p_a(Y)=0$, i.e. a tree of lines. We study the Hilbert function of the
set $Y\cap W$ with cardinality $k\deg (Y)$ and prove when it is the expected one. We give complete classification of the
exceptions for $k=2$ and for $n=k=3$. We apply these results and tools to the case in which $Y$ is a smooth curve with $\Oo _Y(1)$ non-special.
\end{abstract}

\maketitle

\section{Introduction}

A degree $d$ \emph{tree} $T\subset \PP^n$, $n\ge 3$, is a connected nodal curve of degree $d$ with arithmetic genus $0$ whose irreducible components are
lines. A \emph{forest} in $\PP^r$ is a union of finitely many disjoint trees. For all positive integers $s$, $d$ and $d_i$,
$1\le i\le s$, let $T(n,d)$ denote the set of all degree $d$ trees $T\subset \PP^n$ and $T(n;s;d_1,\dots ,d_s)$ the set of all
forests in $\PP^n$ with $s$ connected components of degree $d_1,\dots ,d_s$. The sets $T(n,d)$ and $T(n;s,d_1,\dots,d_s$ are
smooth quasi-projective varieties. If $d\ge 4$ the set $T(n,d)$ is not irreducible, but we may describe its irreducible
components, which are also its connected components, in the following way. Fix an integer $d\ge 2$ and let $T\subset \PP^n$ be a
degree $d$ tree. It is easy to see the existence of an ordering $L_1,\dots ,L_d$ of the irreducible components of $T$ such that
for all $i=1,\dots ,d$ the curve $\cup _{1\le j\le i} L_j$ is connected. We will say that any such ordering is \emph{admissible}. 
Let $T(n,d)''$ be the set of pairs $(T,\leq)$, where $T\in T(n,d)$ and $\leq $ is an admissible ordering of $T$. Fix
one such ordering. Since
$T$ is nodal, for each
$i\in \{2,\dots,d\}$ there is a unique $\tau(i)\in \{1,\dots ,i-1\}$ such that $L_i\cap L_{\tau(i)}\ne \emptyset$. Thus the
admissible ordering of $T$ induces a function $\tau: \{2,\dots ,d\}\to \{1,\dots ,d-1\}$ such that $\tau(i)<i$ for all $i$.
As in \cite{b,be6} we say that $\tau$ is the \emph{type} of $(T,\leq)$. Let $\tau: \{2,\dots ,d\}\to \{1,\dots ,d-1\}$ be any map such
that $\tau(i)<i$ for all $i$. It is obvious how to construct a pair $(T,\leq)\in T(n,d)''$ such that $\tau$ is the type
of $(T,\leq)$. For any map $\tau: \{2,\dots ,d\}\to \{1,\dots ,d-1\}$ such that $\tau(i)<i$ for all $i$ let
$T(n,d,\tau)$ denote the set of all $T\in T(n,d)$ with an admissible ordering $\leq$ such that $(T,\leq)$ has type $\tau$. It
is easy to see that the sets $T(n,d,\tau)$ are the irreducible components of $T(n,d)$, except that we may have
$T(n,d,\tau)=T(n,d,\tau ')$ for some $\tau '\ne \tau$. Note that either $T(n,d,\tau)=T(n,d,\tau ')$ or
$T(n,d,\tau)\cap T(n,d,\tau ') =\emptyset$. Thus the sets $T(r,d,\tau)$ are the connected components of $T(r,d)$, too.
A degree $d$ tree $T\subset \PP^r$ is called a \emph{bamboo} if either $d=1$ or $d\ge 2$ and there is an admissible ordering
$\leq$ of $T$ with type $\tau(i)=i-1$ for all $i$. Thus a degree $d\ge 2$ tree is a bamboo if and only if there is an ordering
$L_1,\dots ,L_d$ of its irreducible components such that $L_i\cap L_j\ne \emptyset$ if and only if $|i-j|\le 1$. If
$d\le 3$ every degree $d$ tree is a bamboo. Let $B(n,d)$ denote the set of all degree $d$ bamboos contained in $\PP^n$.
For all integers $n\ge 3$, $s>0$ and $d_i>0$, $1\le i \le s$, let $B(n;s;d_1,\dots ,d_s)$ denote the subsets of
$T(n;s,d_1,\dots ,d_s)$ formed by all forests whose connected components are bamboos.
If $d=1$ we will say that $T$ is the \emph{final line} of $T$. If $d\ge 2$, a line $L\subset T$ is said to be a \emph{final line} if it meets only another irreducible component of $T$. Note that a degree $d\ge 2$ tree $T$ has at least $2$ final lines and that $T$ is a bamboo if and only if it has exactly $2$ final lines.

 Let $M\subseteq \PP^n$ be an irreducible variety. Let $E\subset M$ be any closed subscheme. We will say that $E$ has
\emph{maximal rank in $M$} if for each $t\in \NN$ the restriction map $H^0(\Oo_M(t))\to H^0(\Oo_E(t))$ has
maximal rank, i.e. it is injective or surjective. If $E$ is a finite set, $E$ has maximal rank in $M$ if and only if
$h^0(M,\Ii _{E,M}(t)) = \max \{0,h^0(\Oo_M(t))-\#E\}$ for all $t$. Fix $\Ll \in \mathrm{Pic}(M)$. We will say that $E$ has
\emph{maximal rank with respect to $\Ll$} if the restriction map $H^0(M,\Ll)\to H^0(E,\Ll _{|E})$ has maximal rank.

Why trees~? They are useful even if one is only interested in the study of smooth curves in projective spaces
(\cite{aly,b,be01,be00,be4,be5,be8,be9,hh3,hi,l8,l9}).  Why a few mathematicians care about the Hilbert function of the
intersection of a curve $C\subset \PP^n$ with a hypersurface of $\PP^n$, often a quadric for $n=3$ and a hyperplane for $n>3$
(see the string of papers just quoted and \cite{l1,l4,l5,pe,v})~? Because it is often a key step to prove that $C$ has the
expected postulation. We also point out that proving that the general element of a tiny family of curves has nice intersection
with a hypersurface may be very efficiently use for results on smooth curves (\cite{b,be6}).

We prove the following results.

\begin{theorem}\label{be2}
Let $W\subset \PP^3$ be a general cubic surface. Fix a positive integer $d$. There is a type
$\tau$ for degree
$d$ trees such that for a general $X\in T(3,d,\tau)$ the set
$X\cap W$ has maximal rank in
$W$ for the line bundle $\Oo_W(t)$ except in the following cases:
$$(t,d)\in \{(1,2),(2,3),(2,4)\}.$$
\end{theorem}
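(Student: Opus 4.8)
The plan is to recast maximal rank as a cohomological vanishing, reduce to a single well-chosen type $\tau$ by semicontinuity, and then run a Horace (Castelnuovo) induction on $d$, exploiting the fact that two adjacent lines of a tree automatically span a plane. Set $Z:=X\intsec W$, a reduced set of $3d$ points on $W$ for general $X$. From $0\to\Oo_{\PP^3}(t-3)\to\Oo_{\PP^3}(t)\to\Oo_W(t)\to 0$ one gets $H^1(\Oo_W(t))=0$ for all $t\ge 0$, so maximal rank of $Z$ for $\Oo_W(t)$ is equivalent to $h^0(W,\Ii_{Z,W}(t))\cdot h^1(W,\Ii_{Z,W}(t))=0$. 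Since $Z$ is finite, $h^1(\Ii_{Z,W}(t))=0$ for $t\gg 0$, so only finitely many $t$ need attention for each $d$. Each condition ``maximal rank for $\Oo_W(t)$'' is open on the irreducible variety $T(3,d,\tau)$, and a finite intersection of nonempty open subsets of an irreducible variety is nonempty; hence, after fixing $\tau$, it suffices to exhibit for every relevant $t$ one tree of type $\tau$ (possibly a flat limit) whose intersection with $W$ has maximal rank for that single $t$.

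\emph{The range $t\le 2$ and the exceptions.} If $F$ has degree $t\le 2$ and vanishes on $Z$, then on each component line $L\subset X$ the restriction $F|_L$ has $\deg 3>t$ zeros, so $L\subset F$ and therefore $X\subset F$; thus $h^0(\Ii_{Z,W}(t))=h^0(\PP^3,\Ii_X(t))$ and everything is governed by the Hilbert function of the tree itself. Computing $h^0(\Ii_X(t))$ from $\chi(\Oo_X(t))=td+1-p_a(X)=td+1$ shows that maximal rank fails precisely when the general degree-$d$ tree is forced to lie on a surface of degree $t$ (a plane, for small $d$ when $t=1$; a quadric, when $td+1<h^0(\Oo_W(t))$ for $t=2$, e.g.\ $d=3,4$). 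This direct computation pins down the finitely many small exceptional pairs, and for all larger $d$ in this range the general tree of a suitable type lies on no such surface, so the restriction has maximal rank.

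\emph{The inductive step $t\ge 3$.} I would take $\tau$ to be a bamboo, write the bamboo as $L_1\union\cdots\union L_d$, and let $H$ be the plane spanned by the two end lines $L_1,L_2$ (these meet, hence are automatically coplanar, so no degeneration is needed to create $H$). With $D:=W\intsec H\in|\Oo_W(1)|$ a smooth plane cubic, the residual exact sequence reads
$$0\to\Ii_{\Res_D Z,W}(t-1)\to\Ii_{Z,W}(t)\to\Ii_{Z\intsec D,D}(t)\to 0.$$
For general $X$ the trace $Z\intsec D$ consists exactly of the six points $(L_1\union L_2)\intsec W$, two collinear triples each lying in $|\Oo_D(1)|$, while $\Res_D Z=(L_3\union\cdots\union L_d)\intsec W$ is the intersection with $W$ of a degree-$(d-2)$ bamboo. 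On the elliptic curve $D$ the class $\Oo_D(t)(-p_1-\cdots-p_6)$ has degree $3t-6\ge 3>0$, so $h^0=3t-6$ and $h^1=0$: the six trace points impose independent conditions on $|\Oo_D(t)|$. The residual term is controlled by the inductive hypothesis for the degree-$(d-2)$ bamboo at twist $t-1$; matching $h^0(\Oo_W(t))-h^0(\Oo_W(t-1))=3t$ against the six removed points and chasing the sequence then delivers maximal rank for $Z$.

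\emph{Main obstacle.} The genuine difficulty is that this clean reduction sends $(t,d)$ to the residual case $(t-1,d-2)$, which may itself be one of the exceptional pairs: for instance $(2,4)$ reduces to the unavoidable $(1,2)$, and $(3,5)$ would reduce to the exceptional $(2,3)$ even though $(3,5)$ is \emph{not} exceptional. Thus near the borderline, where $3d$ is within $3t$ of $h^0(\Oo_W(t))$, the plain peeling of the two end lines must be replaced by a different specialization — peeling a different adjacent pair, using one of the $27$ lines $R\subset W$ as the Horace divisor (so that $\Res_R$ lowers the twist by $\Oo_W(-R)$ and redistributes points along $R\cong\PP^1$), or a Horace differential argument — chosen so as to bypass the exceptional lower cases while still landing in the \emph{same} component $B(3,d)\subset T(3,d,\tau)$. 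Verifying that every specialization used, uniformly in the finitely many relevant $t$, is a flat limit inside one fixed irreducible component, so that semicontinuity transfers maximal rank back to the general member, together with the bookkeeping of these borderline reductions, is where the real work of the proof will lie.
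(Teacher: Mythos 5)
There is a genuine gap, and it sits exactly where you yourself flag it: your Horace step is numerically unbalanced, and the ``borderline'' cases you defer to future work are not a fringe complication but the entire content of the theorem. Your step peels two lines and drops the twist by one, so the residual loses $6$ points while the critical length drops by $h^0(\Oo_W(t))-h^0(\Oo_W(t-1))=3t$. Hence, starting from a balanced pair $(t,d)$ with $3d$ close to $h^0(\Oo_W(t))=(3t^2+3t+2)/2$, the residual at $(t-1,d-2)$ is overloaded by roughly $3(t-2)$ points, so $h^1$ of the residual cannot vanish; and in the $h^0$ direction the step is structurally dead: the trace is only $6$ points on the plane cubic $D$, where $\Oo_D(t)$ has degree $3t$, so $h^0(D,\Ii_{Z\cap D,D}(t))=3t-6>0$ for every $t\ge 3$, and the left-exact sequence $0\to H^0(\Ii_{\Res_D Z}(t-1))\to H^0(\Ii_{Z}(t))\to H^0(\Ii_{Z\cap D,D}(t))$ can never force $h^0(\Ii_Z(t))=0$. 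Since every $d$ is eventually squeezed between consecutive critical degrees ($x_{t-1}<d\le x_t$ with $x_t=(h^0(\Oo_W(t))-1)/3$), an induction that cannot handle the balanced cases proves nothing in the hard range; your proposed repairs (a different adjacent pair gives the same numbers; a line $R\subset W$ as Horace divisor peels at most $t+1$ points and leaves the system $\Oo_W(t)(-R)$, exiting the family of bundles under consideration; ``differential Horace'' is invoked without any construction) are left entirely unverified.

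The paper resolves precisely this issue by choosing a Horace divisor tuned to be exactly balanced: $D:=Q\cap W$ for a general quadric $Q$, a general canonical genus-$4$ curve. One drops the twist by \emph{two} and adds $E\in|\Oo_Q(1,2t-2)|$, i.e.\ $2t-1$ lines lying on $Q$, so that all $3(2t-1)=6t-3$ new intersection points land on $D$, matching $h^0(\Oo_D(t,t))=6t-3$ exactly; this makes the inductive statement $H(t-2)\Rightarrow H(t)$ close up with residual again exactly balanced ($x_t=x_{t-2}+2t-1$). The trace vanishing $h^0(D,\Ii_{E\cap D,D}(t,t))=0$ is then not an elementary Riemann--Roch count but reduces to $h^0(D,\Oo_D(t-1,-t+2))=0$, i.e.\ to the strong Franchetta conjecture for the two $g^1_3$'s on the general genus-$4$ curve (Kouvidakis, Mestrano) --- a key input your sketch has no substitute for. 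Base cases $H(3)$, $H(4)$ and the degrees $d\le 6$ are handled by ad hoc lemmas, and intermediate $d$ with $x_{t-1}<d<x_t$ are treated by removing some of the $2t-1$ added lines from the solution of $H(t)$, using $d>x_{t-1}$ for the $h^0$-vanishing at twist $t-1$. So while your general framing (openness and semicontinuity on the irreducible $T(3,d,\tau)$, the residual-sequence formalism, and the analysis of the exceptional pairs at $t\le 2$) is sound and consistent with the paper, the inductive engine you propose fails in the critical window and the actual proof requires the quadric-section divisor plus Franchetta, neither of which your proposal supplies.
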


Remark \ref{eeb2} explains the exceptional cases and computes their cohomology groups. These are exceptional cases for all
degree $d$ trees transversal to $W$. In the proof of Theorem \ref{be2} we could get several different types.

\begin{question}\label{qbe2}
Is Theorem \ref{be2} true for a general $X\in B(3,d)$?
\end{question}

\begin{theorem}\label{eb1}
Fix integers $d \ge n\ge 3$. Let $W\subset \PP^n$ be a quadric hypersurface of rank $\rho \ge 4$. There is a
degree
$d$ bamboo
$T\subset
\PP^n$ intersecting transversally $W$ and such that the set $T\cap W$ has maximal rank.
\end{theorem}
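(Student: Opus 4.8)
The plan is a double induction, on the dimension $n$ and on the bamboo degree $d$, carried out by the Horace (Castelnuovo) method of cutting $W$ with a hyperplane. Set $a(t):=h^0(\Oo_W(t))=\binom{n+t}{n}-\binom{n+t-2}{n}$. Since $Z:=T\cap W$ is reduced of length $2d$, maximal rank means
\[
h^0(W,\Ii_{Z,W}(t))=\max\{0,\,a(t)-2d\}\qquad\text{for all }t\ge 1,
\]
equivalently the restriction $H^0(\Oo_W(t))\to\CC^{2d}$ is injective while $a(t)\le 2d$ and surjective once $a(t)\ge 2d$. The tightest instance, and essentially the only place the bookkeeping can fail, is the critical value $t_0$ singled out by $a(t_0-1)<2d\le a(t_0)$, at which the two regimes meet.

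For the inductive step I would take a general hyperplane $H\subset\PP^n$, put $W':=W\cap H$ (a quadric in $H\cong\PP^{n-1}$ of rank $\ge\rho-1\ge 3$), and degenerate $T$ to a connected configuration $T'\cup T''$ in which a sub-bamboo $T''$ of degree $d'$ lies inside $H$, so that $T''\cap W\subset W'$, while the complementary bamboo $T'$ of degree $d-d'$ meets $H$ transversally and away from $W'$; the two pieces are joined at one node of $H\setminus W$. Writing $Z$ for the flat limit of $T\cap W$, the residual exact sequence on $W$ for the divisor $W'\in|\Oo_W(1)|$,
\[
0\to\Ii_{\Res_{W'}Z,\,W}(t-1)\to\Ii_{Z,W}(t)\to\Ii_{Z\cap W',\,W'}(t)\to 0,
\]
identifies the trace $Z\cap W'$ with the $2d'$ points $T''\cap W'$ on the lower quadric $W'$, and the residual $\Res_{W'}Z$ with the $2(d-d')$ points $T'\cap W$ at twist $t-1$. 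The outer induction (on $n$) gives the trace maximal rank on $W'\subset\PP^{n-1}$, the inner induction (on $d$, using all $t$) gives the residual maximal rank on $W$ at degree $t-1$; substituting both into the sequence and using upper semicontinuity of $h^0(W,\Ii_{Z,W}(t))$ as $T$ varies transfers the conclusion from the degenerate configuration back to a general $T\in B(n,d)$.

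Everything hinges on splitting $d=d'+(d-d')$ so that trace and residual land in the same regime at once. In the injective range $t\le t_0$ this requires $2d'\ge h^0(\Oo_{W'}(t))$ and $2(d-d')\ge a(t-1)$ simultaneously; since $a(t)=a(t-1)+h^0(\Oo_{W'}(t))$ these are jointly solvable over $\RR$ exactly when $a(t)\le 2d$, and the surjective range $t\ge t_0$ is dual. The hypothesis $d\ge n$ is what makes the decisive inequality hold at $t_0$ and simultaneously lets $d'$ be kept $\ge n-1$, so that the induction in $\PP^{n-1}$ really applies; it also forces the induction to be run for every degree $\ge 1$, since the residual degree $d-d'$ will often drop below $n$, so the small-degree instances (where genuine exceptions could lurk, compare the list in Theorem \ref{be2}) must be cleared as part of the base of the induction.

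The main obstacle, and where I expect the real work to sit, is the integrality/parity mismatch at $t_0$: a bamboo feeds its points to the trace two at a time (a whole line dropped into $H$), whereas $h^0(\Oo_{W'}(t_0))$ may be odd, so no choice of $d'$ balances the two sides exactly. I would cure this with a half-line degeneration — routing a single line of $T$ through a point of $W'$ while keeping it transverse to $H$, so that exactly one of its two points of $W$ lands on $W'$ and the other stays residual — which shifts one point between trace and residual and repairs the counts. Arranging these degenerations compatibly with the chain structure of the bamboo, and checking that the limit $Z$ remains reduced of length $2d$ (transversality of the degenerate $T$ to $W$), is the delicate core; the base cases — a smooth quadric in $\PP^3$, itself reduced to a smooth conic $\cong\PP^1$ by the same device, where any distinct points trivially have maximal rank, together with the low values of $t$ — are then comparatively routine.
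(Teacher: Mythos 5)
Your overall architecture coincides with the paper's: a double induction in which a general hyperplane $H$ cuts $W$ in a lower-dimensional quadric $D:=W\cap H$, a degenerate bamboo is assembled from a piece inside $H$ (trace, handled by induction on $n$) and a piece meeting $H$ at one node (residual, handled at twist $t-1$), and the residual exact sequence plus semicontinuity over the irreducible family $B(n,d)'$ transfers the conclusion to a general bamboo; this is exactly Lemma \ref{eb6} combined with Lemma \ref{eb3}. Your parity repair also targets the right problem, though by a different mechanism: you shift one point from residual to trace by routing a single transverse line through a point of $D$, whereas the paper carries the imbalance through the induction by building it into the assertion $R(n,t)$ (in the odd case the solution bamboo has one removable ``linking point'' on a final line, and case (d) of Lemma \ref{eb6} even chooses $H$ to contain the linking line). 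Both devices are legitimate; yours would require strengthening the inductive statement in $\PP^{n-1}$ to allow one extra general point of $D$ in the trace, a point you gesture at but do not formalize.

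The genuine gap is at the bottom of the induction on $t$, which you dismiss as ``comparatively routine.'' Your device provably fails at $t=2$: there the residual lives in degree $1$, and the residual points lie on a connected curve of degree $d-d'\approx h^0(\Oo_W(1))/2=(n+1)/2<n$, hence span a linear space of dimension at most $d-d'$, so $h^0(W,\Ii(1))$ of the residual never vanishes in the surjective regime. This is precisely the phenomenon of Remark \ref{eb4}: $R(n,1)$ is false because any tree of degree $<n$ lies in a hyperplane. Consequently the $t$-induction can only start from $t=3$ (as in Lemma \ref{eb6}), and the case $t=2$ needs a separate, non-Horace argument: the paper proves $R(n,2)$ in Lemma \ref{eb5} by reducing, via the residual sequence of $W$ in $\PP^n$, to the postulation of the tree itself with respect to quadrics, and then inducting on $\deg(T)$ by adding a general line through a general point of the curve, where Lemma \ref{rnn0} is needed to rule out the obstruction that every quadric through the tree is a cone with vertex containing it. Nothing in your proposal supplies this step, and it is not bookkeeping — it is a different idea. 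Two smaller slips: a general hyperplane section of a rank-$\rho$ quadric has rank $\min\{\rho,n\}$, which stays $\ge 4$ for all $n\ge 4$, whereas your bound $\rho-1\ge 3$ would allow the rank to drop below the theorem's hypothesis and break the outer induction; and for $n=3$ your descent to a conic handles only the trace, so the $t=2$ difficulty persists there too — the paper instead closes $n=3$ by citing Proposition \ref{be1}, which treats all bidegrees $\Oo_Q(a,b)$ on $Q\cong\PP^1\times\PP^1$.
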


For higher degree hypersurfaces we are only able to prove the following results.

\begin{proposition}\label{fe1}
Fix positive integers $n\ge 3$, $d$ and $k\ge 3$. Let $\tau$ be a type for degree $d$ trees. Let $W\subset \PP^n$ be a general
degree $k$ hypersurface and $Y$ a general element of $T(n,d,\tau)$. 
\begin{enumerate}
\item If $d \le \binom{n+t-k}{n-1}$, then $h^1(W,\Ii _{W\cap Y,W}(t))=0$.
\item If $d\ge \binom{n+t+k-2}{n-1}$, then $h^0(W,\Ii _{W\cap Y,W}(t))=0$.
\end{enumerate}
\end{proposition}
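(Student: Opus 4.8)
The plan is to transfer the two statements into vanishing statements for the $kd$ points $Z:=W\cap Y\subset\PP^n$ and then run a Horace-type double induction whose single elementary step reproduces the two binomial thresholds via Pascal's identity. First I would exploit that $W$ is a degree $k$ hypersurface: since $\Ii_{W,\PP^n}(t)\cong\Oo_{\PP^n}(t-k)$ and $Z\subset W$, there is a short exact sequence
\[
0\to\Oo_{\PP^n}(t-k)\to\Ii_{Z,\PP^n}(t)\to\Ii_{W\cap Y,W}(t)\to 0 .
\]
As $n\ge 3$ we have $h^1(\Oo_{\PP^n}(t-k))=h^2(\Oo_{\PP^n}(t-k))=0$, so the sequence gives $h^1(W,\Ii_{W\cap Y,W}(t))=h^1(\PP^n,\Ii_{Z}(t))$ and $h^0(W,\Ii_{W\cap Y,W}(t))=h^0(\PP^n,\Ii_{Z}(t))-\binom{n+t-k}{n}$. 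Hence (1) is equivalent to $h^1(\PP^n,\Ii_Z(t))=0$ and (2) to the minimality $h^0(\PP^n,\Ii_Z(t))=\binom{n+t-k}{n}$ (the inequality $\ge$ being forced by $Z\subset W$). Because the inductive step below detaches pieces of $Y$, I would prove both equivalent statements for $Z=W\cap Y$ with $Y$ a general \emph{forest} of $d$ lines, the trees $T(n,d,\tau)$ being the connected case.

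The heart is a Horace induction on $(n,t)$. Choose a general hyperplane $H\cong\PP^{n-1}$, so $W\cap H$ is a general degree $k$ hypersurface of $H$, and specialize $Y$ so that a sub-forest of $a$ lines lies in $H$ while the other $d-a$ lines meet $H$ only at attaching nodes kept off $W$. The trace $Z\cap H$ is then the $ka$ points cut on $W\cap H$ by the $a$ lines inside $H$, and $\Res_H Z$ is the $k(d-a)$ points carried by the off-$H$ lines; the Castelnuovo sequence
\[
0\to\Ii_{\Res_H Z,\PP^n}(t-1)\to\Ii_{Z,\PP^n}(t)\to\Ii_{Z\cap H,H}(t)\to 0
\]
splits the problem into a forest configuration in $\PP^{n-1}$ at degree $t$ and a forest configuration in $\PP^n$ at degree $t-1$. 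For (1), vanishing of $h^1$ at both ends forces $h^1(\Ii_Z(t))=0$; the inductive hypothesis asks $a\le\binom{n+t-k-1}{n-2}$ and $d-a\le\binom{n+t-k-1}{n-1}$, and since these two binomials sum to $\binom{n+t-k}{n-1}$ a valid $a$ exists exactly when $d\le\binom{n+t-k}{n-1}$. For (2), left-exactness gives $h^0(\Ii_Z(t))\le h^0(\Ii_{\Res_H Z}(t-1))+h^0(\Ii_{Z\cap H,H}(t))$, and inserting the inductive minima $\binom{n+t-k-1}{n}$ and $\binom{n+t-k-1}{n-1}$, available once $a\ge\binom{n+t+k-3}{n-2}$ and $d-a\ge\binom{n+t+k-3}{n-1}$, yields, again by Pascal, the value $\binom{n+t-k}{n}$; such an $a$ exists exactly when $d\ge\binom{n+t+k-2}{n-1}$. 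Thus both thresholds in the statement are precisely what Pascal's identity demands.

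The recursion decreases $n$ through the trace and $t$ through the residue, so it terminates at points on $\PP^1$ (where $h^1(\Oo(t))=0$ iff the number of points is $\le t+1$ and $h^0$ is the obvious remainder) and at small $t$ (for (1) the hypothesis is empty for $t<k-1$ and reduces at $t=k-1$ to the independence of $k$ collinear points for $\Oo_{\PP^n}(k-1)$, immediate since a line is projectively normal and $k$ points are independent on $\PP^1$ for $\Oo(k-1)$). Each special configuration constructed above is a genuine member of the irreducible family of forests, and of $T(n,d,\tau)$ in the connected case, along which $W\cap Y$ stays $kd$ reduced points; hence $h^1(\Ii_Z(t))=0$, an open condition, and the minimal value of $h^0(\Ii_Z(t))$, attained on a nonempty open set, both propagate to the general member, which proves the proposition.

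The main obstacle I anticipate is the geometry of the degeneration rather than the numerology. I must place the chosen sub-forest in $H$ so that simultaneously (i) $W\cap Y$ remains $kd$ reduced points throughout the specialization, so that $Z$ is flat and semicontinuity applies; (ii) the attaching nodes of the off-$H$ lines stay off $W$, so that $\Res_H Z$ is exactly the expected $k(d-a)$ points with no collision onto $H$; and (iii) both the trace and the residue are general in their own forest families, so the inductive hypotheses are actually applicable. Arranging these three conditions compatibly with one fixed general $W$, together with the routine but unavoidable check of the low-dimensional base cases, is where the real work concentrates.
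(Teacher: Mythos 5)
Your opening reduction is sound: the sequence $0\to\Oo_{\PP^n}(t-k)\to\Ii_{Z,\PP^n}(t)\to\Ii_{W\cap Y,W}(t)\to 0$ does translate (1) and (2) into statements about the $kd$ points $Z$ in $\PP^n$, and your Pascal bookkeeping reproduces both thresholds exactly. But the Horace induction you then run is not the paper's route and has a genuine gap at its very first step: the claim ``so $W\cap H$ is a general degree $k$ hypersurface of $H$'' is false. In your induction $W$ is \emph{fixed} general, and as $H$ varies the sections $W\cap H$ sweep out only an $n$-dimensional family inside the $\left(\binom{n+k-1}{n-1}-1\right)$-dimensional linear system of degree-$k$ hypersurfaces of $H\cong\PP^{n-1}$ (for $n=k=3$, a $3$-parameter family of plane cubics inside a $9$-dimensional system). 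So the inductive hypothesis, which is stated for a \emph{general} hypersurface, cannot be invoked for the trace. To repair this you would have to strengthen the statement to arbitrary reduced hypersurfaces (so that Bertini keeps the class stable under hyperplane sections), which is a genuinely stronger and unproved claim. The paper does the opposite: by semicontinuity it specializes $W$ (not $Y$) to the hyperplane $H$ counted with multiplicity $k$, uses Remark \ref{t1} to see that for \emph{every} type $\tau$ the trace $Y\cap H$ of a general $Y\in T(n,d,\tau)$ is a general set of $d$ points of $H$, and then peels off the infinitesimal layers via $0\to\Oo_H(-e-1)\to\Oo_{M_{e+1}}\to\Oo_{M_e}\to 0$, reducing everything to the Hilbert function of $d$ general points of $\PP^{n-1}$ in the twists $t-k+1,\dots,t$ --- no Horace step in $\PP^n$ at all.

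The second gap is the type constraint, which you correctly flag as ``the real work'' but do not supply, and it is not a technicality because the proposition quantifies over every type $\tau$. For the spreading type $\tau(i)=1$ (the Example in Section 2) one has $\Sing(T)\subset L_1$, so any hyperplane containing two nodes contains $L_1$; your specialization must therefore either put $L_1$ inside $H$, forcing all the residual lines through prescribed \emph{collinear} points of $H$, or attach essentially only one line to the sub-forest in $H$. In either case the trace and residual configurations are not general members of their forest families, so your inductive hypotheses do not apply to them; making this rigorous requires a Hartshorne--Hirschowitz-style reinforced induction carrying marked attachment points through every step, a substantially larger apparatus than the proposition itself. The paper's choice to degenerate $W$ rather than $Y$ is precisely what makes this difficulty evaporate, since after specializing $W$ to the $k$-fold hyperplane the only trace of $Y$ that matters is $Y\cap H$, which by Remark \ref{t1} can be prescribed arbitrarily for any $\tau$. (A by-product of the paper's method worth noting: its induction on the layers actually yields part (2) under the weaker hypothesis $d\ge\binom{n+t-1}{n-1}$, whereas your thresholds are exactly those forced by your Horace split.)
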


\begin{proposition}\label{fe2}
Fix positive integers $n\ge 3$, $g\ge 0$ and $k\ge 3$. Let $X$ be a smooth projective curve of genus $g$. There is an integer
$d_0(g,n)$ such that for every $d\ge d_0(g,n)$ the general degree $d$ embedding $Y\subset \PP^n$ has the following property.
 Let
$W\subset
\PP^n$ be a general degree $k$ hypersurface. 
\begin{enumerate}
\item If $d \le \binom{n+t-k}{n-1}$, then $h^1(W,\Ii _{W\cap Y,W}(t))=0$.
\item If $d\ge \binom{n+t+k-2}{n-1}$, then $h^0(W,\Ii _{W\cap Y,W}(t))=0$.
\end{enumerate}
\end{proposition}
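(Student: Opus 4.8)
The plan is to deduce this from Proposition \ref{fe1} by degeneration and semicontinuity. Since $\dim W = n-1 \geq 2$ one has $H^1(\Oo_W(t)) = 0$, so for a length-$kd$ subscheme $Z \subset W$ the two conclusions $h^1(W,\Ii_{Z,W}(t)) = 0$ and $h^0(W,\Ii_{Z,W}(t)) = 0$ are exactly surjectivity and injectivity of the restriction $H^0(\Oo_W(t)) \to H^0(\Oo_Z(t))$; both are open in a flat family of such $Z$. Hence for fixed general $W$ it suffices to degenerate the general degree-$d$ embedding to a nodal $Y_0$ meeting $W$ transversally and to verify the two vanishings for the $kd$ points $W \cap Y_0$.

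I would build $Y_0$ as follows. For $g=0$ a general tree of the good type $\tau$ of Proposition \ref{fe1} smooths to a general rational curve, settling that case. For $g>0$ put $Y_0 = C \cup T$, where $C$ is $X$ embedded by a general very ample line bundle of bounded degree $e=e(g,n)$ (using a general projection to $\PP^n$, an embedding as $n\geq 3$) and $T$ is a general tree of degree $d-e$ meeting $C$ transversally at one point; then $p_a(Y_0)=g$ and, for $d \geq d_0(g,n)$, $H^1(N_{Y_0/\PP^n})=0$, so $Y_0$ smooths to a degree-$d$ embedding of a genus-$g$ curve, with $\Oo_Y(1)$ non-special because $d>2g-2$. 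For general $W$ we have $W\cap Y_0 = A \sqcup B$, $A=W\cap C$ of length $ke$ and $B=W\cap T$ of length $k(d-e)$.

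In range (1) one has $d-e \leq d \leq \binom{n+t-k}{n-1}$, so Proposition \ref{fe1} gives $h^1(W,\Ii_{B,W}(t))=0$; since $h^0(\Oo_W(t)) = \sum_{j=1}^{k}\binom{n+t-j}{n-1} \geq k\binom{n+t-k}{n-1} \geq kd$, the residual system $H^0(\Ii_{B,W}(t))$ has dimension $\geq ke$, and as $C$ and $W$ vary the $ke$ points of $A$ move in general position and impose independent conditions on it (here $t$ is large, so $\Oo_W(t)$ is very ample), giving $h^1(W,\Ii_{W\cap Y_0,W}(t))=0$. The harder case is range (2): now $d \geq \binom{n+t+k-2}{n-1}$ but $d-e$ may fall below this threshold, so $B$ alone need not force $h^0=0$. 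For the bounded set of exponents $t$ with $d-e < \binom{n+t+k-2}{n-1} \leq d$ I would instead bound the residual by the one-sided maximal-rank estimate $h^0(W,\Ii_{B,W}(t)) \leq \max\{0,\,h^0(\Oo_W(t))-k(d-e)\} \leq ke$, using $h^0(\Oo_W(t)) \leq k\binom{n+t+k-2}{n-1} \leq kd$; the $ke$ general points of $A$ then cut this space to $0$.

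The main obstacles are therefore two. First, upgrading Proposition \ref{fe1} to the upper bound $h^0(W,\Ii_{B,W}(t)) \leq ke$ for a tree of degree just below the critical value: this is weaker than full maximal rank and I expect it to come out of the same inductive line-adding argument, but it must be isolated. Second, because smoothing $Y_0$ produces a genus-$g$ curve merely near $X$ in moduli rather than $X$ itself, the statement for the fixed curve $X$ requires the irreducibility, for $d \geq d_0(g,n)$, of the family of degree-$d$ embeddings of genus-$g$ curves with non-special hyperplane bundle, together with openness of the two vanishings, to propagate the conclusion from the dense good locus to the general embedding of the given $X$. Transversality, disjointness of $A$ and $B$, and general position of $A$ hold for general $W$ and general $C$ by Bertini.
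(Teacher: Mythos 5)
Your route (degenerate only $Y$, keep $W$ a general degree~$k$ hypersurface, split $Y_0=C\cup T$ and feed the tree part into Proposition \ref{fe1}) has a genuine gap exactly where you flag the ``harder case,'' and it is not a routine one. For the exponents $t$ with $d-e<\binom{n+t+k-2}{n-1}\le d$ you need the one-sided bound $h^0(W,\Ii_{B,W}(t))\le \max\{0,h^0(\Oo_W(t))-k(d-e)\}$ for a tree whose degree sits in the window $\binom{n+t-k}{n-1}<d-e<\binom{n+t+k-2}{n-1}$. But Proposition \ref{fe1} is silent in that window by construction: its proof is not a line-adding induction at all; it specializes $W$ to a hyperplane $H$ counted with multiplicity $k$ and inducts on the multiplicity via $0\to\Oo_H(-e-1)\to\Oo_{M_{e+1}}\to\Oo_{M_e}\to 0$, and the dead zone between the two binomials is precisely the information lost in that filtration. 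In the window, $W\cap T$ consists of collinear $k$-tuples (each line meets $W$ in $k$ collinear points), and maximal-rank statements for such constrained configurations are the hard content of this paper: Theorem \ref{be2} and Theorem \ref{eb1} achieve them only for $k\le 3$ (indeed $n=3$ for $k=3$), with exceptional cases. So ``I expect it to come out of the same inductive argument'' assumes away the main difficulty. A second, smaller gap is in your range (1): the points $A=W\cap C$ are not in general position in $W$ --- they are confined to a fixed curve of bounded degree $e$ and, as $W$ varies, move only in the linear system $|\Oo_C(k)|$, which has codimension $g$ in the space of degree-$ke$ divisors on $C$; very ampleness of $\Oo_W(t)$ does not give independence of $ke$ conditions, so this step needs an actual argument (e.g.\ restriction to $C$ plus an $h^1$-vanishing), not Bertini. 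Finally, your transfer back to the fixed curve $X$ is insufficient as stated: openness of the good locus in an irreducible family of embeddings of \emph{varying} genus-$g$ curves yields the conclusion for the general curve of genus $g$, not for the given $X$, since the embeddings of $X$ form a positive-codimension subfamily that an open dense set can a priori miss. What is needed is that $C\cup T$ is a flat limit of embeddings of $X$ itself, which is exactly what the paper invokes (\cite{be8,be9}) in Lemma \ref{fe3}.

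For comparison, the paper's proof is one line on top of its own machinery and sidesteps all three issues: as in the proof of Proposition \ref{fe1} it degenerates $W$ (not just $Y$) to the $k$-fold hyperplane, which reduces both vanishings to the single statement that the $d$ points $Y\cap H$ have maximal rank in $H$ for a general degree-$d$ embedding $Y$ of $X$; that is Lemma \ref{fe3}, proved by degenerating $Y$ to $E\cup T$ with $E$ a non-special degree-$(g+n)$ embedding of $X$ and $T$ a bamboo whose hyperplane section $T\cap H$ can be \emph{prescribed} to be a general finite set (Remark \ref{t1}), with \cite{be8,be9} guaranteeing this is a limit of embeddings of $X$. Working against a hyperplane is what makes the tree's section genuinely general and is why the two binomial thresholds in the statement appear: they are exactly what the multiplicity induction delivers, and the intermediate window you ran into is deliberately left untouched. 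If you want to salvage your approach, the honest prerequisite is a maximal-rank theorem for $W\cap T$ against a general degree-$k$ hypersurface in the intermediate range, which for $k\ge 4$ is open (cf.\ Question \ref{qbe2} and the restricted scope of Theorems \ref{be2} and \ref{eb1}).
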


\section{Preliminary observations}

We say that $\emptyset$ is the type of a degree $1$ tree. For all integers $n\ge 3$, positive integers $d_i$, $1\le i\le s$,
and types $\tau _i$ for degree $d_i$ trees, $1\le i\le s$ let $T(n;s;d_1,\tau_1,\dots ,d_s,\tau _s)$ denote the set of all
$T_1\cup \cdots \cup T_s\in T(n;s,d_1,\dots ,d_s)$ with $T_i$ of type $\tau_i$. $T(n;s,d_1,\tau_1,\dots ,d_s,\tau _s)$ is an
irreducible and smooth quasi-projective variety. 

\begin{remark}\label{t1}
Fix a hyperplane $H\subset \PP^n$, $n\ge 3$, a positive integer $d$, a type $\tau: \{2,\dots ,d\}\to \{1,\dots ,d-1\}$ for
degree $d$ trees and a finite set
$S\subset H$ such that
$\#S =d$. Since any two points of $\PP^n$ are collinear, it is easy to show the existence of $T\in T(n,d,\tau)$ such that
$T\cap H=S$. Note that any such $T$ intersects transversally $H$.
\end{remark}

\begin{remark}\label{t2}
Fix a hyperplane $H\subset \PP^n$, $n\ge 3$, a positive integer $d$, a type $\tau: \{2,\dots ,d\}\to \{1,\dots ,d-1\}$ for
degree $d$ trees and a finite set
$A\subset H$ such that $\#A\le \lfloor d/2\rfloor$. Since any two points of $\PP^n$ are collinear, it is easy to show the existence of $T\in
B(n,d)$ such that
$A = \mathrm{Sing}(T)$ and no irreducible component of $T$ is contained in $H$.
\end{remark}

Remark \ref{t2} is not true for many types $\tau$ of degree $d\ge 4$ trees. We just give an example.
\begin{example}
Consider the function $\tau: \{2,\dots ,d\}\to \{1,\dots ,d-1\}$ with $\tau(i)=1$ for all $i$. We call trees with this type
\emph{spreading} trees. Assume $d\ge 4$. Fix a hyperplane $H\subset \PP^n$ and let $T=L_1\cup \dots \cup L_d$ be a spreading
tree. Since $\mathrm{Sing}(T)\subset L_1$, any hyperplane containing $2$ singular points of $T$ contains $L_1$.
\end{example}

\begin{remark}
Take a general $T =T_1\cup \cdots \cup T_s\in T(n;s,d_1,\dots ,d_s)$ with $T_i$ of type $\tau_i$. In all cases considered in
this paper,
$T\cap W$ has maximal rank if all $T_i\cap W$ have maximal rank.
\end{remark}

We recall the following result, proved in a preliminary version of \cite{be01}; the case in which all types are bamboos is
stated in \cite[Claim at p. 592]{be6}.

\begin{proposition}\label{be1}
Fix positive integers $a$, $b$, $d$. Take a type $\tau$ for degree $d$ trees of $\PP^3$
and a general $T\in T(3,d,\tau)$. The set $T\cap Q$ with cardinality $2d$ has maximal rank with respect to the line bundle
$\Oo_Q(a,b)$, unless $(a,b,d) =(1,1,2)$.
\end{proposition}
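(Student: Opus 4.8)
The plan is to pass to cohomology and to build the required tree by induction on $d$, peeling off one line at a time. Writing $Q\cong\PP^1\times\PP^1$, we have $h^0(\Oo_Q(a,b))=(a+1)(b+1)$ and $h^1(\Oo_Q(a,b))=0$ for $a,b\ge 0$; a general $T\in T(3,d,\tau)$ meets $Q$ transversally in a reduced set $E=T\cap Q$ of $2d$ points, and maximal rank with respect to $\Oo_Q(a,b)$ amounts to $h^0(\Ii_{E,Q}(a,b))=\max\{0,(a+1)(b+1)-2d\}$, i.e. $h^1(\Ii_{E,Q}(a,b))=0$ when $(a+1)(b+1)\ge 2d$ and $h^0(\Ii_{E,Q}(a,b))=0$ when $(a+1)(b+1)\le 2d$. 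Since $h^0(\Ii_{E,Q}(a,b))$ is upper semicontinuous on the (dense open) locus of transversal $T$ and is always $\ge\max\{0,(a+1)(b+1)-2d\}$, it suffices to exhibit, for each type $\tau$, a single $T\in T(3,d,\tau)$ realizing the expected value. I will prove by induction on $d$ the sharper statement computing $h^0(\Ii_{E,Q}(a,b))$ exactly for all $(a,b)$, the value being the expected one except that it equals $1$ rather than $0$ in the single case $(a,b,d)=(1,1,2)$.

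For the base cases, when $d=1$ the set $E$ is a pair of distinct points, which imposes independent conditions on $|\Oo_Q(a,b)|$ for all $a,b\ge 1$. When $d=2$ the two lines meet at a node $n\notin Q$, hence are coplanar; with $H=\langle L_1,L_2\rangle$ the conic $C=H\cap Q\in|\Oo_Q(1,1)|$ contains all four points of $E$, so $\Res_C E=\emptyset$ and the residual sequence
\[
0\to \Oo_Q(a-1,b-1)\to \Ii_{E,Q}(a,b)\to \Ii_{E,C}(a+b)\to 0
\]
reduces the problem to four general points of $C\cong\PP^1$ with respect to $\Oo_C(a+b)$. This gives $h^1(\Ii_{E,Q}(a,b))=0$ as soon as $a+b\ge 3$, whereas for $(a,b)=(1,1)$ the four coplanar points lie on the unique conic $C$ and force $h^0(\Ii_{E,Q}(1,1))=1$. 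This is the exceptional case, caused precisely by the forced coplanarity of two lines meeting at a node.

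For the inductive step ($d\ge 3$), order the components admissibly; since $\tau(j)<j$ for all $j$, the last line $L_d$ meets only $L_{\tau(d)}$, so it is final and $T'=L_1\cup\cdots\cup L_{d-1}$ is a general tree of the induced type $\tau'$ of degree $d-1$. Thus $E=E'\sqcup\{p,q\}$ with $E'=T'\cap Q$ and $\{p,q\}=L_d\cap Q$, and for a general node $n_d\in L_{\tau(d)}$ and general line $L_d\ni n_d$ the point $p$ already sweeps all of $Q$. Put $N=(a+1)(b+1)$ and $V=H^0(\Ii_{E',Q}(a,b))$, whose dimension is given by the inductive hypothesis. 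If $N\ge 2d$ then $\dim V=N-2(d-1)\ge 2$, and it suffices to choose the pair so that $p,q$ impose two independent conditions on $V$; as $\Oo_Q(a,b)$ is very ample and the admissible pairs dominate $Q$, a general one avoids the relevant (finite, for general $E'$) base loci, so $h^1(\Ii_{E,Q}(a,b))=0$. If $N\le 2(d-1)$ and the predecessor is not exceptional, then $V=0$ and $h^0(\Ii_{E,Q}(a,b))=0$. In the two remaining situations $\dim V=1$ — namely $N=2d-1$, and the single case $(a,b,d)=(1,1,3)$ where the predecessor is the exception — one picks $p$ off the unique curve $D\in|\Oo_Q(a,b)|$ cut by $V$ (possible since $D\subsetneq Q$ and $p$ sweeps $Q$), so that no nonzero section of $V$ vanishes at $p$ and $h^0(\Ii_{E,Q}(a,b))=0$. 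Hence every $(a,b,d)$ with $d\ge 3$ has maximal rank, and the only global exception is $(1,1,2)$.

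The main obstacle is the crossing between the injective and surjective ranges, concentrated in the cases $\dim V=1$ and, above all, in the verification (for $N\ge 2d$) that a general admissible pair imposes two independent conditions on $V$: the pairs are constrained by $\overline{pq}\cap L_{\tau(d)}\ne\emptyset$, so one must rule out that the resulting $3$-dimensional family of pairs lies entirely in the proper dependency locus of $V$. This rests on the very ampleness of $\Oo_Q(a,b)$ for $a,b\ge 1$ and on the fact that the first point of each pair sweeps $Q$. An alternative route is a Horace argument that places one intersection point of each of several chosen lines onto a ruling $R\in|\Oo_Q(1,0)|$ and induces through $0\to\Ii_{\Res_R E,Q}(a-1,b)\to\Ii_{E,Q}(a,b)\to\Ii_{E\cap R,R}(b)\to 0$; its cost is that the residual scheme is no longer a tree section, so one would have to prove a more general statement allowing such hybrid configurations.
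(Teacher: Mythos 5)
The paper itself does not prove Proposition~\ref{be1}: it recalls it as known, from a preliminary version of \cite{be01} (with the bamboo case stated in \cite[Claim at p.~592]{be6}), so your argument can only be judged on its own merits --- and it has a genuine gap at the heart of the induction, in the range $N:=(a+1)(b+1)\ge 2d$ where you must show that the two new points $\{p,q\}=L_d\cap Q$ impose independent conditions on $V=H^0(\Ii_{E',Q}(a,b))$. Your justification, that a general admissible pair ``avoids the relevant (finite, for general $E'$) base loci,'' is false as stated. First, $\mathrm{Bs}(V)$ need not be finite even for general $E'$: already for $d=3$, $(a,b)=(1,2)$ (so $N=6=2d$), the four points of $E'$ lie on the conic $C'=\langle L_1,L_2\rangle\cap Q\in|\Oo_Q(1,1)|$ forced by the coplanarity of two meeting lines, and since $C'$ meets any curve in $|\Oo_Q(1,2)|$ in only $3<4$ points, every section of $V$ contains $C'$; hence $\mathrm{Bs}(V)=C'$ is a curve. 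Second, and more seriously, in the boundary case $\dim V=2$ the space $V(-p)$ is spanned by a single section, so its base locus is an entire divisor $D_p\in|\Oo_Q(a,b)|$, never finite; meanwhile $q$ is not a free point: for fixed $p$, as the node $n_d$ moves along $L_{\tau(d)}$, the point $q$ sweeps only the conic $C_p=\langle p,L_{\tau(d)}\rangle\cap Q$, and this conic passes through the two points $u,v$ of $L_{\tau(d)}\cap Q\subset E'$ at which every section of $V$ already vanishes. What you actually need is that $C_p\not\subseteq Z(s)$ for some nonzero $s\in V(-p)$, equivalently that the restriction map $V\to H^0(C_p,\Oo_Q(a,b)|_{C_p})$ has rank at least $2$ for general $p$. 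Neither very ampleness of $\Oo_Q(a,b)$ (which separates unconstrained pairs, not pairs confined to a moving conic) nor the fact that $p$ sweeps $Q$ gives this, and the degree count leaves little slack: any $s\in V(-p)$ vanishes at the three points $p,u,v$ of $C_p$, while $C_p\cdot\Oo_Q(a,b)=a+b$, so ruling out containment in general amounts to controlling residual spaces such as $H^0(\Ii_{E'\setminus\{u,v\}}(a-1,b-1))$ --- that is, to proving a stronger inductive statement about configurations that are no longer of the form $(\text{tree})\cap Q$.

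You in fact identify exactly this point as ``the main obstacle,'' and your closing paragraph concedes that the clean alternative (a Horace specialization onto a ruling $R\in|\Oo_Q(1,0)|$, inducting through the residual sequence) would force a more general statement about hybrid configurations --- but you then neither carry out that strengthening nor close the dependency-locus argument, so the induction is incomplete precisely in the equilibrium range near $N=2d$, which is the only hard range. The rest is sound: the semicontinuity reduction, the base cases $d\le 2$ with the correct identification of the exception $(1,1,2)$ via the forced conic, the cases $\dim V\le 1$ (choosing $p$ off the unique divisor, using that $p$ sweeps $Q$), and the case $V=0$ all work. To repair the proof you must either establish the rank-$\ge 2$ claim for the restriction of $V$ to the moving conic $C_p$ (which again requires an induction on sets that are not tree sections), or run the strengthened Horace induction you sketch; some such strengthening appears unavoidable, and is in the spirit of the original arguments in \cite{be01,be6} that the paper cites in place of a proof.
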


\begin{lemma}\label{fe3}
Fix integers $n\ge 3$ and $g\ge 0$. Let $X$ be a smooth curve of genus $g$. Fix a hyperplane $H\subset \PP^n$. There is an
integer $d_0(g,n)$ for that $H\cap Y$ has maximal rank in $H$ for a general degree $d$ embedding $Y\subset \PP^n$ of $X$.
\end{lemma}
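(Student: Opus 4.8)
The plan is to reduce the assertion to a statement about a configuration of $d$ points in $H\cong\PP^{n-1}$ and then produce a good configuration by degenerating to a reducible curve, as in \cite{b,be6}.

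First I would reduce the number of conditions to be checked. For a general embedding $Y$ meets $H$ transversally, so $S:=H\cap Y$ is a set of $d$ reduced points of $H\cong\PP^{n-1}$, and $H\cap Y$ has maximal rank in $H$ if and only if $S$ has maximal rank, i.e. $h^0(H,\Ii_{S,H}(t))=\max\{0,\binom{n+t-1}{n-1}-d\}$ for all $t$. Set $t_0=\min\{t: \binom{n+t-1}{n-1}\ge d\}$, so that $\binom{n+t_0-2}{n-1}<d\le\binom{n+t_0-1}{n-1}$. A nonzero degree $t-1$ form vanishing on $S$ yields, after multiplication by a linear form, a nonzero degree $t$ form vanishing on $S$; hence injectivity of $H^0(\Oo_H(t))\to H^0(\Oo_S)$ propagates downwards in $t$, and dually surjectivity propagates upwards. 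Therefore $S$ has maximal rank as soon as these maps are injective in degree $t_0-1$ and surjective in degree $t_0$, equivalently $h^0(\Ii_{S,H}(t_0-1))=0$ and $h^0(\Ii_{S,H}(t_0))=\binom{n+t_0-1}{n-1}-d$. Since maximal rank at a fixed $t$ is an open condition, by semicontinuity it suffices to exhibit, for each $d\ge d_0(g,n)$, one curve in the closure of the family of general embeddings whose hyperplane section satisfies these two equalities.

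Next I would degenerate. Fix an embedding $X_0\cong X$ of some small degree $d'=d'(g,n)$ (a general projection to $\PP^n$ of a nonspecial embedding; $n\ge 3$ guarantees $X_0$ smooth) with $X_0\not\subset H$, and attach to $X_0$ along a single rational tail a bamboo $B$ of degree $d-d'$ with no component in $H$. Then $Y_0:=X_0\cup B$ is a connected nodal curve of degree $d$ and arithmetic genus $g$, transversal to $H$, and by the smoothing technique of \cite{b,be6} it is a flat limit of general degree $d$ embeddings of a genus $g$ curve. By Remark \ref{t1} the bamboo may be chosen so that $B':=H\cap B$ is an arbitrary, hence general, set of $d-d'$ points of $H$, disjoint from $A:=H\cap X_0$. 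Thus it remains to check that the $d$ points $S_0=A\sqcup B'$ satisfy the two equalities above. This works provided $d_0(g,n)$ is chosen so large that $t_0\ge d'$ for all $d\ge d_0(g,n)$: a set of $d'$ distinct points imposes independent conditions on $|\Oo_H(t)|$ once $t\ge d'-1$, so at both degrees $t_0$ and $t_0-1$ the set $A$ already imposes its full $d'$ conditions and the residual systems $H^0(\Ii_{A,H}(t_0))$ and $H^0(\Ii_{A,H}(t_0-1))$ have dimensions $\binom{n+t_0-1}{n-1}-d'$ and $\binom{n+t_0-2}{n-1}-d'$. A general point imposes one further independent condition on any nonzero subsystem of $|\Oo_H(t)|$ (its base locus being a proper subset), so adding the $d-d'$ general points of $B'$ drops these dimensions by one each until they reach $0$. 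Hence $h^0(\Ii_{S_0,H}(t_0-1))=0$, because $d-d'\ge \binom{n+t_0-2}{n-1}-d'$ (this is $d>\binom{n+t_0-2}{n-1}$), and $h^0(\Ii_{S_0,H}(t_0))=\binom{n+t_0-1}{n-1}-d$, because $d-d'\le \binom{n+t_0-1}{n-1}-d'$ (this is $d\le\binom{n+t_0-1}{n-1}$); this is exactly maximal rank of $S_0$.

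The genuine obstacle is not the point count but the geometry underlying the degeneration: one must know that $Y_0=X_0\cup B$ is indeed a flat limit of smooth degree $d$ embeddings of genus $g$, transversal to $H$ and with no component in $H$, so that semicontinuity transfers maximal rank from $S_0$ to the hyperplane section of a general embedding. This is the content of the smoothing results of \cite{b,be6}. The only arithmetic to keep track of is the inequality $t_0\ge d'$, which fixes the threshold $d_0(g,n)$ and guarantees that the constrained points $A$ coming from the genus carrying component never have to impose more than their automatic $d'$ independent conditions, all remaining conditions being supplied by the free points $B'$ furnished by the bamboo.
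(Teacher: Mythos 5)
Your proposal is correct and follows essentially the same route as the paper: degenerate to the union of a fixed low-degree embedding of $X$ and a bamboo attached at one point off $H$ whose hyperplane section is a general set of points, verify maximal rank of the limit section by counting conditions at the two critical degrees, and conclude by the smoothing results of Ballico--Ellia (the paper cites \cite{be8,be9} at this step) plus semicontinuity. The only real difference is bookkeeping: you certify that $A=H\cap X_0$ imposes independent conditions via the crude bound $t\ge \#A-1$, forcing $t_0\ge d'$ and hence a $d_0(g,n)$ polynomial in $d'$ of degree $n-1$, whereas the paper takes $E$ nonspecial of degree exactly $g+n$ and uses the first $h^1$-vanishing degree $z$ together with the Castelnuovo--Mumford lemma (sharpened via linear general position in Remark \ref{fe4}) to get a much smaller threshold.
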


\begin{proof}
First consider a general non-special embedding $E\subset \PP^n$ of $X$ with $\deg (E)=g+n$. Since $E$ is general, $E$ is
transversal to $H$. Let $z$ be the first positive integer  such that $h^1(H,\Ii_{H\cap E}(z))=0$. The
Castelnuovo-Mumford's lemma gives $h^1(H,\Ii_{H\cap E}(x))=0$ for all $x\ge z$. Let
$S\subset H$ be a general finite subset. We have $h^1(H,\Ii _{(H\cap E)\cup S}(t)) = 0$ for all integers $t\ge k$ such that
$g+n+\#S \le \binom{n+t-1}{n}$ and $h^1(H,\Ii _{(H\cap E)\cup S}(t)) = 0$ for all integers $t\ge z$ such that $g+n+\#S \ge
\binom{n+t-1}{n}$. Thus $(E\cap E)\cup S$ has maximal rank in $H$ if $\#S\ge \binom{n+z-1}{n-1}-g-n$. There is a bamboo
$T\subset \PP^n$ such that $T\cap H =S$ and $T$ meets quasi-transversally $E$ at a unique point, $p$, with $p\notin H$.
The curve $E\cup T$ is a flat limit of a family of degree $d$ embedding of $X$ (\cite{be8,be9}).
\end{proof}

\begin{proof}[Proof of Proposition \ref{fe2}:]
Let $H$ be a hyperplane. As in the proof of Proposition \ref{fe1} it is sufficient to prove that $H\cap Y$ has maximal rank in
$H$ for a general degree $d$ embedding $Y$ of $X$. This is true by Lemma \ref{fe3}.
\end{proof}

\begin{remark}\label{fe4}
The proof of Lemma \ref{fe2} shows how to get a not very large $d_0(g,n)$. Take a general degree $g+n$ embedding $E\subset
\PP^n$ of $X$. It is sufficient to find the first positive integer $k$ such that
$h^1(H,\Ii_{E\cap H}(k)) =0$. In arbitrary characteristic the generality of $E$ shows that $E\cap H$ is in linear general
position in $H$ (\cite[Corollaries 3.5 and 3.6]{he}). Thus we may take $k=\lceil (g+n-1)/n\rceil$. Thus we may take $k=2$ for
all $g\le n+1$. When $k=2$ we may say more, because obviously $E\cap H$ spans $H$. Thus in all cases with $k=2$ we may take
$d_0(g,n)=g+n$.
\end{remark}

\section{Proof of Theorem \ref{eb1}}

Let $H\subset \PP^n$ , $n\ge 4$,be a general
hyperplane. Set $D:=  W\cap H$. For any integer $t\ge 0$ we have $h^0(\Oo_W(t)) =\binom{n+t}{n} -\binom{n+t-2}{n}$.  Thus for
all integers $t>0$ we have $h^0(\Oo_W(t)) -h^0(\Oo_W(t-1)) = h^0(\Oo _D(t)) =
\binom{n+t-1}{n-1} -\binom{n+t-3}{n-1}$.

\begin{remark}\label{eb3}
Take  $t>0$ and assume the existence of a degree $\lceil h^0(\Oo _W(t))/2\rceil$ tree $T\subset \PP^n$ 
transversal to $W$ with $h^0(W,\Ii_{T\cap W}(t)) =0$. If $h^0(\Oo_W(t))$ is even, then
$h^1(W,\Ii_{T\cap W}(t)) =0$. Now assume $h^0(\Oo _W(t))$ odd. Thus the set $S:= T\cap W$ has cardinality $h^0(\Oo _W(t))+1$
and
$h^0(W,\Ii_S(t)) =0$, there is $S'\subset S$ such that $\#S' =\#S-1$ and $h^0(W,\Ii_{S'}(t)) =0$. Thus $h^1(W,\Ii_{S'}(t))
=0$. Set $\{o\}:= S\setminus S'$ and call $L_o$ the irreducible component of $T$ containing $o$. The point $o\in T\cap W$ is
called a \emph{linking point} and $L_o$ a \emph{linking line} or a \emph{linking component} of $T$. Since $S'$ is not unique, in general, $T$ may have
several linking points and several linking lines. A linking component contains at most $2$ linking points. For a general
$T$ of prescribed type a linking components has two linking points.  Since a linking
component is a final component, a bamboo as at most $2$ linking components. A general bamboo of degree $\lceil h^0(\Oo
_W(t))/2\rceil$ has at least one linking component if and only if $R(n,t)$ is true. If $R(n,t)$ is true a general bamboo of degree $\lceil h^0(\Oo
_W(t))/2\rceil$ has $2$ linking components.
\end{remark}

Note that for any tree $T$ intersecting transversally $W$ the integer $\#T\cap W$ is even. For all integers $n\ge 4$ and $t\ge
2$ we define the following assertion
$R(n,t)$ whose formulation depends on the parity class of $h^0(\Oo_W(t))$.

\quad {\bf Assertion} $R(n,t)$: If $h^0(\Oo_W(t))$ is even, assume the existence of a degree $h^0(\Oo_W(t))/2$ bamboo
$T\subset \PP^n$ intersecting transversally $W$ and with $h^0(W,\Ii_{W\cap T}(t))=0$. If
$h^0(\Oo_W(t))$ is odd
$R(n,t)$ requires the existence of a bamboo
$T\subset
\PP^n$ such that
$\deg (T) =
\lceil h^0(\Oo_W(t))/2\rceil$, $T$ intersects transversally $W$, $h^0(W,\Ii_{T\cap W}(t)) =0$ and $T$ has a linking line which
is a final line.

\begin{remark}\label{eb2}
Take $n\ge 4$ and $t>0$ and assume the existence of a degree $x:=\lceil h^0(\Oo _W(t))/2\rceil$ tree $T\subset \PP^n$ satisfying $R(n,t)$,
i.e. assume that $T$ is transversal to $W$ and $h^0(W,\Ii_{T\cap W}(t)) =0$. Fix an admissible ordering $L_1,\dots ,L_x$ of the
irreducible components of $T$ and call $\tau$ the associated type of $T$

\quad (a) Assume that $h^0(\Oo _W(t))$ is even, i.e. assume $h^0(\Oo _W(t))=2x$. First take $d<t$. For any integer $d$ with $1\le d <x$ set $T_d:= L_1\cup \cdots \cup L_x$. The tree $T_d$
satisfy $h^1(W,\Ii _{W\cap T_d}(t))=0$. Now assume $d\ge x$ and call $T_d$ any degree $d$ tree containing $T$. Since $h^1(W,\Ii_{T\cap W}(t)) =0$, $h^1(W,\Ii_{T_d\cap W}(t)) =0$.

\quad (b) Assume $h^0(\Oo _W(t))$ odd, i.e. assume $h^0(\Oo _W(t))=2x-1$. Assume $d\ge x$ and call $T_d$ any degree $d$ tree containing $T$. Since $h^0(W,\Ii_{T\cap W}(t)) =0$, $h^0(W,\Ii_{T_d\cap W}(t)) =0$.

Now assume $d<x$. Assume that for a general $T\in T(n,x,\tau)$ the tree $T$ has a linking line $L_o$ which
is a final line $L_o$ with $o$ a linking point. Set $S':= W\cap T\setminus \{o\}$. Thus $h^i(W,\Ii_{S'}(t)) =0$, $i=0,1$. With
this assumption the closure
$T'$ of
$T\setminus L_o$ in
$\PP^n$ is a tree. Fix any admissible ordering $R_1,\dots ,R_{x-1}$ of the irreducible components of $T'$. For any integer
$1\le d<x$ set
$T_d:= L_1\cup \cdots L_d$. Since $T$ intersects transversally $W$, $T_d$ intersects transversally $W$. Since $T_d\cap
W\subset S'$ and $h^1(W,\Ii_{S'}(t)) =0$, $h^1(W,\Ii _{T_d\cap W}(t))=0$. If $T$ is a bamboo, then each $T_d$ is a bamboo.
\end{remark}

\begin{remark}\label{eb4}
$R(n,1)$ is false, because for instance any tree $T\subset \PP^n$ of degree $<n$ is contained in a
hyperplane and hence its intersection with $W$ is contained in a hyperplane. The same observation shows why we assumed $d\ge
n$ in Theorem \ref{eb1}.
\end{remark}

\begin{lemma}\label{rnn0}
Take a vector space $W\subseteq H^0(\Oo _{\PP^n}(2))$ and an integral curve $T\subset \PP^n$ contained in the base locus of
$W$. For any scheme
$E\subset
\PP^n$ set $W(-E):= H^0(\Ii _E(2))\cap W$. For a general
$o\in T$ and a general line $L\subset \PP^n$ such that $o\in L$ we have $\dim W(-L) = \max \{0,\dim W-2\}$, unless all $Q\in
|W|$ are cones with vertex containing $T$.
\end{lemma}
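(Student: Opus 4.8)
The plan is to reduce the statement to the rank of a single linear restriction map. Fix $o\in T$ and write a line through $o$ as $L=\langle o,p\rangle$, and let $B_Q$ denote the symmetric bilinear form attached to a quadric $Q$. Parametrising $L$ by $[\lambda:\mu]\mapsto \lambda o+\mu p$ and using that $Q(o)=0$ for every $Q\in W$ (because $o\in T\subseteq\op{Bs}(W)$), we get $Q(\lambda o+\mu p)=2\lambda\mu B_Q(o,p)+\mu^2 Q(p)$. Hence the restriction $\rho_L\colon W\to H^0(\Oo_L(2))$ lands in the $2$-dimensional subspace $H^0(\Oo_L(2)(-o))$ of forms vanishing at $o$, and in the basis $\{\lambda\mu,\mu^2\}$ it is $\rho_L(Q)=\bigl(B_Q(o,p),\,Q(p)\bigr)$. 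Since $W(-L)=\ker\rho_L$, we get $\dim W(-L)=\dim W-\op{rank}\rho_L\ge \dim W-2$, and trivially $\dim W(-L)\ge 0$; this already yields $\dim W(-L)\ge\max\{0,\dim W-2\}$.

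For the reverse inequality it suffices, by lower semicontinuity of the rank, to show that for general $(o,p)$ one has $\op{rank}\rho_L=\min\{2,\dim W\}$, i.e. that the functionals $\phi_1:=[Q\mapsto B_Q(o,p)]$ and $\phi_2:=[Q\mapsto Q(p)]$ on $W$ span a space of that dimension. The functional $\phi_2$ is nonzero for general $p$ (any nonzero $Q_0\in W$ has $Q_0(p)\ne0$ for general $p$), which settles the case $\dim W\le1$ and gives $\op{rank}\rho_L\ge1$ in general. So the whole content is the case $\dim W\ge2$, where I must prove that $\phi_1$ and $\phi_2$ are linearly independent for general $o\in T$ and general $p$, unless we are in the exceptional case.

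The hard step is the analysis of the locus where $\phi_1,\phi_2$ are dependent. The exceptional hypothesis says exactly that $B_Q(o,\cdot)\equiv0$ for all $Q\in W$ and all $o\in T$; if it fails then for general $o\in T$ some $Q_1\in W$ has $B_{Q_1}(o,\cdot)\not\equiv0$, so $\phi_1\ne0$ for general $p$, and dependence forces $\phi_1=c\,\phi_2$, i.e. $B_Q(o,p)=c\,Q(p)$ for every $Q\in W$, with $c=c(o,p)$. Eliminating $c$ between $Q_1$ and an arbitrary $Q$ gives the polynomial identity $\ell_{Q_1}\,Q=\ell_Q\,Q_1$ in $p$, where $\ell_Q(p):=B_Q(o,p)$ is linear; since the prime linear form $\ell_{Q_1}$ divides $\ell_Q Q_1$, for each $Q$ either $\ell_{Q_1}\mid\ell_Q$ (forcing $Q\in\langle Q_1\rangle$) or $\ell_{Q_1}\mid Q_1$. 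As $\dim W\ge2$ there is a $Q\notin\langle Q_1\rangle$, so $\ell_{Q_1}\mid Q_1$, and then the identity shows every member of $W$ carries the common linear factor $m:=Q_1/\ell_{Q_1}$. I expect this reducible, common-factor configuration to be the main obstacle: once $W=m\cdot\Lambda$ one must compute $\rho_L$ directly, splitting according to whether the integral curve $T$ lies on $\{m=0\}$ or on the residual linear space $\PP(\Lambda^\perp)$, and check that this either restores rank $2$ (so the dependence cannot hold for general $(o,p)$ after all) or places $T$ inside the common vertex of every $Q\in W$, i.e. lands us in the stated exceptional case. Carrying out this degenerate analysis cleanly, and matching it precisely to the ``cone with vertex containing $T$'' hypothesis, is the delicate point of the proof.
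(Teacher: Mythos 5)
Your reduction of the lemma to the rank of $\rho_L$ is correct, and it is in substance a polarized version of the paper's own argument: the paper notes $W(-L)\subseteq W(-p)$ and is done unless every $Q\in W(-p)$ contains the cone $C_p(T)$, which (since $Q(o)=Q(p)=0$) is exactly your condition that $\phi_1=B_{\bullet}(o,p)$ be proportional to $\phi_2=\bullet(p)$; the paper then moves $p$ to a nearby point $p'\in Q$ and argues by semicontinuity that $B_Q(\cdot,o)$ vanishes along $Q$, hence that $Q$ is a cone with vertex containing $T$, where you instead eliminate $c$ and land on the common-factor decomposition $W=m\cdot\Lambda$. Everything you actually prove is right, and your elimination is in fact sharper than you state: writing $\ell_Q:=B_Q(o,\cdot)$ and polarizing $Q=m\,\ell_Q$ gives $\ell_Q=\frac{1}{2}\bigl(m(o)\ell_Q+\ell_Q(o)m\bigr)$ with $\ell_Q(o)=Q(o)=0$, so $m(o)=2\ne 0$; thus the branch ``$T$ lies on $\{m=0\}$'' that you propose to analyze is empty, and the only surviving branch is $T\subseteq V(\Lambda)$, i.e. every residual linear form $\ell_Q$ vanishes on $T$ while $m$ does not.

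The genuine gap is that this last, decisive case analysis is only announced (``I expect\dots one must\dots and check''), and the outcome you expect is false: in the surviving branch the rank is \emph{not} restored to $2$, and $T$ does \emph{not} land in the vertices. Take $n=3$, $W=\langle x_0x_2,\,x_0x_3\rangle$, $T=\{x_2=x_3=0\}$, so $T$ lies in the base locus $\{x_0=0\}\cup T$. For $Q=m\ell$ with $m=x_0$, $\ell\in\Lambda=\langle x_2,x_3\rangle$, your formula gives $\rho_L(Q)=\bigl(\frac{1}{2}m(o)\ell(p),\,m(p)\ell(p)\bigr)$, so $\phi_1$ and $\phi_2$ are proportional for \emph{every} $(o,p)$ and $\dim W(-L)=1\ne\max\{0,\dim W-2\}=0$; yet $Q=x_0(ax_2+bx_3)$ has vertex the line $\{x_0=ax_2+bx_3=0\}$, which never contains $T$. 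So your route, carried out honestly, does not prove the lemma as printed but disproves it: the conclusion must admit the further exceptional case $W=m\cdot\Lambda$ with $\Lambda$ vanishing on $T$ and $m$ not vanishing on $T$ (equivalently, all members of $|W|$ share a fixed hyperplane component not containing $T$, the residual hyperplanes all containing $T$). You should not read this as merely failing to find the missing trick: the paper's proof has the matching blind spot, since its step ``take $p'\in Q$ near $p$'' only sweeps the irreducible component of $Q$ through $p$, so when $Q$ is reducible with its other component inside the base locus --- precisely the configuration above --- the deduction that $B_Q(\cdot,o)$ vanishes on all of $Q$, hence that $Q$ is a cone with vertex containing $T$, fails. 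A correct completion of your argument therefore consists of adding this configuration as a second exceptional case (or imposing a hypothesis excluding it, e.g. that the elements of $W$ have no common linear factor), not of forcing it into the stated cone alternative.
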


\begin{proof}
Since $W(-o) =W$, we have $\dim W(-L)\ge \dim W-2$. Since $L$ contains a general point of $\PP^n$, we have $\dim W(-L) \ge \max
\{0,\dim W-1\}$. Thus we may assume $\dim W\ge 2$. Fix a general $p\in \PP^n$. Thus $\dim W(-p) =\dim W-1$. Fix $Q\in
W(p-p)$. We are done, unless every line containing $p$ and intersecting $T$ is contained in $Q$, i.e. unless $Q$ contains the
cone $C_p(T)$ with vertex $p$ and base $T$. Take $p'\in Q$ near $p$. We still have $\dim W(-p') =\dim W(-p)$ by semicontinuity.
Thus $\dim W(-L) = \max \{0,\dim W-2\}$, unless $Q$ is a cone with vertex containing $T$. Since we may take as $Q$ any element
of $|W|$ containing a general $p\in \PP^n$, we conclude.
\end{proof}

\begin{lemma}\label{eb5}
$R(n,2)$ is true for all $n\ge 3$. 
\end{lemma}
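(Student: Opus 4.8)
The plan is to reduce Lemma \ref{eb5} to a purely projective statement about bamboos and quadrics, and then to build the required bamboo one line at a time using Lemma \ref{rnn0}. Throughout write $V:=H^0(\Oo_{\PP^n}(2))$, so $\dim V=\binom{n+2}{2}$, let $q_W\in V$ be the form cutting out $W$, and for a scheme $E$ set $V(-E):=H^0(\Ii_E(2))$. The target degree is $x:=\ceil{h^0(\Oo_W(2))/2}$, and since $h^0(\Oo_W(2))=\dim V-1$ we have $\dim V=2x+1$ when $h^0(\Oo_W(2))$ is even and $\dim V=2x$ when it is odd.

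First I would show that for \emph{any} connected bamboo $T=L_1\cup\cdots\cup L_x$ transversal to $W$ one has the clean identity $h^0(W,\Ii_{T\cap W}(2))=\dim V(-T)$. Because each $L_j\not\subseteq W$ and $L_j\cap W$ consists of two distinct points, on $L_j$ both $q_W$ and any $Q\in V(-(T\cap W))$ restrict to binary quadrics with the same two zeros, hence $Q|_{L_j}=c_j(Q)\,q_W|_{L_j}$ for a unique scalar $c_j(Q)$. This gives a linear map $\Phi\colon V(-(T\cap W))\rightarrow \KK^x$, $Q\mapsto(c_1,\dots,c_x)$, with $\ker\Phi=V(-T)$ and $\Phi(q_W)=(1,\dots,1)$. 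The key observation is that $\op{im}\Phi$ is exactly the diagonal line: if some $c_{j_0}=0$ then $Q\supseteq L_{j_0}$, so $Q$ contains every node lying on $L_{j_0}$; for a neighbouring component $L_{j_1}$ the form $Q|_{L_{j_1}}$ then vanishes at the two points of $L_{j_1}\cap W$ and at that node (three distinct points, the node being off $W$ by transversality), whence $Q\supseteq L_{j_1}$ and $c_{j_1}=0$. Connectedness propagates this to all indices, so every element of $\op{im}\Phi$ with a vanishing coordinate is $0$; a subspace of $\KK^x$ meeting each coordinate hyperplane only in $0$ has dimension $\le 1$. Therefore $\dim V(-(T\cap W))=\dim V(-T)+1$, i.e. $h^0(W,\Ii_{T\cap W}(2))=\dim V(-T)$. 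In particular $R(n,2)$ is reduced to producing a degree $x$ bamboo, transversal to $W$, with $V(-T)=0$; note that this reformulated problem does not involve the rank of $W$ at all, consistently with the unconditional statement of the lemma.

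To construct such a $T$ I would track $\dim V(-T_i)$ for $T_i=L_1\cup\cdots\cup L_i$. The first line imposes three independent conditions, so $\dim V(-L_1)=\dim V-3$. For the inductive step attach $L_{i+1}$ at a general point $o_i\in L_i$; since $L_i\subseteq\op{Bs}(V(-T_i))$, Lemma \ref{rnn0} applied with curve $L_i$, point $o_i$ and line $L_{i+1}$ gives $\dim V(-T_{i+1})=\dim V(-T_i)-2$, so inductively $\dim V(-T_i)=\max\{0,\dim V-2i-1\}$. In the even case $\dim V=2x+1$ yields $\dim V(-T_x)=0$; in the odd case $\dim V=2x$ gives $\dim V(-T_{x-1})=1$, and the last line $L_x$ drops this to $0$. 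Either way $V(-T)=0$, a general such $T$ meets $W$ transversally in $2x$ points, and in the odd case $L_x$ is a final line that serves as the linking line required by $R(n,2)$ (removing one of its two points of $W\cap T$ leaves $2x-1=\dim V-1$ points still imposing independent conditions). For $n=3$ one may instead read the conclusion off Proposition \ref{be1}, since there $\Oo_Q(2,2)=\Oo_Q(2)$ and $(a,b,d)=(2,2,x)$ is not the excluded triple $(1,1,2)$.

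The single point that must be secured is that every application of Lemma \ref{rnn0} delivers the full drop of $2$, equivalently that at no stage is every quadric through $T_i$ a cone with vertex containing $L_i$; this is the main obstacle, and it is genuinely delicate in the even case where the count $2x+1=\dim V$ is tight and no conditions may be wasted. I would dispatch it by genericity: the bamboos of a fixed admissible type form an irreducible family, the condition $\dim V(-T)=\max\{0,\dim V-2\deg T-1\}$ is open, and the cone degeneracy already fails at the initial steps (a line, and two incident lines, impose independent conditions and their quadrics are not all cones along a fixed line), so it fails on a dense open set; the inductive choices of $o_i$ and $L_{i+1}$ can then be kept outside the proper bad locus. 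Establishing that this degeneracy never persists, so that the greedy construction actually reaches $\dim V(-T)=0$ at degree exactly $x$, is the heart of the matter; once it is in place, the reduction of the first two paragraphs completes the proof for both parities.
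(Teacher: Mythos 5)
Your proposal is correct and follows essentially the same route as the paper: your first-paragraph identity $h^0(W,\Ii_{T\cap W}(2))=\dim V(-T)$ is an elementary, hands-on unpacking of the paper's residual exact sequence \eqref{eqeb1} (which yields $h^i(W,\Ii_{T\cap W,W}(2))=h^i(\PP^n,\Ii_{T,\PP^n}(2))$ from $h^1(\Ii_{T,\PP^n})=h^1(\Oo_T)=0$ for a connected tree), and your line-by-line construction is exactly the paper's ``induction on $\deg(T)$ using Lemma \ref{rnn0}''. The cone-degeneracy you rightly flag as the remaining point is precisely what the paper compresses into ``easily proved'', so your write-up is, if anything, more explicit than the published proof, including the odd-parity linking-line verification, which the paper leaves tacit.
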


\begin{proof}
Fix a tree $T\subset \PP^n$ transversal to $W$. Consider the exact sequence
\begin{equation}\label{eqeb1}
0\to \Ii _{T,\PP^n}\to \Ii_{T,\PP^n}(2)\to \Ii_{T\cap W,W}(2)\to 0
\end{equation}
Since $h^1(\Ii_{T,\PP^n}) =0$ and $h^2(\Ii _{T,\PP^n}(2)) =h^1(\Oo _T)=0$, the long cohomology exact sequence of \eqref{eqeb1}
gives that
$h^1(W,\Ii_{T\cap W,W}(2)) =0$ if and only if $h^1(\PP^n,\Ii_{T,\PP^n}(2))$ and a similar statement holds for $h^0$. Thus to
prove the lemma it is sufficient to prove that either $h^1(\PP^n,\Ii_{T,\PP^n}(2))=0$ or $h^0(\PP^n,\Ii_{T,\PP^n}(2))=0$. This
is easily proved by induction on the integer $\deg(T)$ using Lemma \ref{rnn0}.
\end{proof}

\begin{lemma}\label{eb6}
Fix integers $n\ge 4$ and $t\ge 3$. Assume Theorem \ref{eb1} in $\PP^{n-1}$, $R(n-1,t)$ and $R(n,t-1)$. Then $R(n,t)$ is true.
\end{lemma}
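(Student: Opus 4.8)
The plan is to run the Horace method with respect to a general hyperplane $H\subset\PP^n$, degenerating across the relation $h^0(\Oo_W(t))=h^0(\Oo_W(t-1))+h^0(\Oo_D(t))$ recorded before Remark \ref{eb3}, where $D:=W\cap H$. For $H$ general $D$ is a quadric of rank $\ge 4$ in $H\cong\PP^{n-1}$ (using $\rho\ge 4$), so Theorem \ref{eb1} in $\PP^{n-1}$ and $R(n-1,t)$ both apply to it. For a reduced $E\subset W$ the residual sequence $0\to\Ii_{\Res_H E,W}(t-1)\to\Ii_{E,W}(t)\to\Ii_{E\cap H,D}(t)\to 0$ gives $h^0(W,\Ii_{E,W}(t))\le h^0(W,\Ii_{\Res_H E,W}(t-1))+h^0(D,\Ii_{E\cap H,D}(t))$. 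Thus it suffices to build a degree $x=\lceil h^0(\Oo_W(t))/2\rceil$ bamboo $T$ (with the prescribed linking structure when $h^0(\Oo_W(t))$ is odd) whose $W$-points split, under $\Res_H$ and $\,\cdot\cap H$, into a set imposing independent conditions on $\Oo_W(t-1)$ and a set imposing independent conditions on $\Oo_D(t)$.

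Write $a:=h^0(\Oo_W(t-1))$ and $b:=h^0(\Oo_D(t))$. A line off $H$ contributes its two transversal points of $W$ to $\Res_H(T\cap W)$, while a line in $H$ contributes its two points of $D$ to $(T\cap W)\cap H$. When $a,b$ have compatible parities I would take a bamboo $T_2$ off $H$ of degree $\lceil a/2\rceil$ from $R(n,t-1)$ and a bamboo $T_1\subset H$ of degree $\lceil b/2\rceil$ from $R(n-1,t)$, invoking Theorem \ref{eb1} in $\PP^{n-1}$ when I need the full maximal-rank conclusion (vanishing of both $h^0$ and $h^1$) for the sets on $D$, and glue them into one bamboo by letting a non-linking final line of $T_2$ meet a final line of $T_1$ at a point of $H\setminus W$. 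The degrees add to $x$, the Horace inequality gives $h^0(W,\Ii_{T\cap W,W}(t))=0$, and when exactly one of $a,b$ is odd the linking final line of the odd witness survives as a final line of $T$; a short check that its linking point is again redundant after the split supplies the linking line required in the odd case of $R(n,t)$. Since $a,b$ are large for $n\ge 4$, $t\ge 3$, enough final lines are available for the gluing.

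The decisive case is $a$ and $b$ both odd (so $h^0(\Oo_W(t))$ is even), where a naive split fails: lines in $H$ and off $H$ each contribute an even number of points, yet I need $a$ points in $\Res_H(T\cap W)$ and $b$ points in $(T\cap W)\cap H$, both odd. My plan is to absorb both parities into a single \emph{crossing line} $\Lambda$ meeting $W$ at one point $q_B\in D$ and one point $q_A\notin H$. Deleting the linking final lines from the two odd witnesses yields bamboos $T_2''$ (off $H$, degree $(a-1)/2$) and $T_1''\subset H$ (degree $(b-1)/2$) whose points impose \emph{independent} conditions on $\Oo_W(t-1)$, resp. $\Oo_D(t)$ — independence being inherited by subsets from the maximal-rank sets of $R(n,t-1)$ and $R(n-1,t)$, so that $h^0=1$ in each. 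Attaching $\Lambda$ as a final line to $T_2''$ and routing it to also meet a final line of $T_1''$ gives a bamboo of total degree $(a-1)/2+(b-1)/2+1=x$, with $\Res_H(T\cap W)=(W\cap T_2'')\cup\{q_A\}$ and $(T\cap W)\cap H=(D\cap T_1'')\cup\{q_B\}$.

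The hard part will be the genericity closing this subcase: I must choose $\Lambda$ through the attaching node $v\notin W$ so that $q_B=\Lambda\cap H$ is general in $D$ and the induced second point $q_A$ is general on $W$. Requiring $q_B\in D$ is one condition on the $\PP^{n-1}$ of lines through $v$, leaving $n-2\ge 2$ free parameters, so $q_B$ can be taken general; then $h^0(D,\Ii_{D\cap T_1''}(t))=1$ forces $h^0(D,\Ii_{(D\cap T_1'')\cup\{q_B\}}(t))=0$. The delicate point is that $q_A$ is no longer free but determined by $q_B$ and $v$, and I must verify that as $q_B$ ranges over $D$ the point $q_A$ does not remain inside the divisor cut on $W$ by the section spanning $H^0(W,\Ii_{W\cap T_2''}(t-1))$; I would establish this by a dimension count on the correspondence $q_B\mapsto q_A$ together with the freedom to move $v$ and the initial witnesses. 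Granting this, the Horace inequality yields $h^0(W,\Ii_{T\cap W,W}(t))=0$, completing the even subcase and hence $R(n,t)$.
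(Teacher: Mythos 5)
Your parity analysis and your handling of the three compatible-parity cases do match the paper's steps (a)--(c): there too one glues a witness of $R(n,t-1)$ (off $H$) to a witness of $R(n-1,t)$ (inside $H$, for the quadric $D=W\cap H$ of rank $\min\{\rho,n\}\ge 4$) at a node in $H\setminus D$, runs the residual sequence of $D$ in $W$, and lets the linking final line of the unique odd witness survive so as to produce the linking line demanded by the odd case of $R(n,t)$. The genuine gap is in your decisive case ($a$ and $b$ both odd), and it is geometric, not just an unfinished verification. Your crossing line $\Lambda$ is not contained in $H$, so $\Lambda\cap H$ is a \emph{single} point, which you require to be the general point $q_B\in D\subset W$. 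But you also require $\Lambda$ to meet a final line $R$ of $T_1''\subset H$; that intersection point lies in $H$ and hence must be $q_B$ itself. So the node of your tree at $\Lambda\cap R$ lies on $W$: the tree is no longer transversal to $W$ (which $R(n,t)$ explicitly requires), and the count collapses, since $q_B\in R\cap D\subset D\cap T_1''$ forces your trace $(D\cap T_1'')\cup\{q_B\}$ to be just the $b-1$ points of $D\cap T_1''$, whence $\#(T\cap W)\le 2x-1<h^0(\Oo_W(t))$ and $h^0(W,\Ii_{T\cap W}(t))\ge 1$ no matter how general the choices are. The repair is to route connectivity through the part off $H$: glue $T_1''$ to one final line of $T_2''$ at the point where that line meets $H$ (a general point of $H$, hence off $D$, exactly as in your other cases), and hang $\Lambda$ from the \emph{other} final line of $T_2''$, so that $\Lambda$ touches $H$ only at the free point $q_B$.

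With that repair your sets $(W\cap T_2'')\cup\{q_A\}$ and $(D\cap T_1'')\cup\{q_B\}$ are the correct residual and trace, but your ``hard part'' --- showing $q_A$ avoids the unique divisor $\Delta\in|\Ii_{W\cap T_2''}(t-1)|$ --- is left as an unexecuted dimension count, and it is the crux of the case. It can in fact be dispatched cheaply because $W$ is a quadric: choose $\Lambda$ \emph{first}, as the line through a general $q_A\in W\setminus(\Delta\cup H)$ and a general $q_B\in D$ off the zero divisor of the unique section of $\Ii_{D\cap T_1'',D}(t)$; then $\Lambda\cap W=\{q_A,q_B\}$ automatically, and $T_2''$ can be attached afterwards, since the witnesses of $R(n,t-1)$ fill a dense open subset of the irreducible variety of bamboos of that degree and so can be wiggled to meet $\Lambda$ at a point off $W$. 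Note finally that the paper treats this case quite differently: it takes $H$ to \emph{contain} the linking final line $L_o$ of the $R(n,t-1)$-witness $Y$ and asks the $R(n-1,t)$-witness $T\subset H$ to have the same $L_o$ as its linking line, so the two bamboos overlap in $L_o$ and the degree drops to $x$, with both points of $L_o\cap W$ landing on the trace. Your crossing-line strategy, once corrected, is arguably the more robust route: in the paper's configuration the residual $(W\cap Y)\setminus(L_o\cap W)$ has only $h^0(\Oo_W(t-1))-1$ points, so multiplication by the equation of $H$ injects a one-dimensional space of sections into $H^0(W,\Ii_{(Y\cup T)\cap W}(t))$, an obstruction that the paper's very terse step (d) does not visibly remove, whereas a crossing line puts an odd number of residual points off $H$, which is exactly what the parity of $h^0(\Oo_W(t-1))$ requires.
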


\begin{proof}
Set $x:= \lceil h^0(\Oo_W(t))/2\rceil$ and $y:= \lceil h^0(\Oo_W(t-1))/2\rceil$. Let $Y\subset \PP^n$ be a bamboo of degree $y$
satisfying $R(n,t-1)$ and general among the trees with that type. Note that  $h^0(\Oo _W(t))-h^0(\Oo_W(t-1)) = h^0(\Oo_D(t))$
for any quadric hypersurface $D$ of $\PP^{n-1}$. Thus $h^0(\Oo_D(t))$ is odd if and only if exactly one among $h^0(\Oo_W(t))$
and $h^0(\Oo_W(t-1))$ is odd.

\quad (a) Assume $h^0(\Oo _W(t)) \equiv
h^0(\Oo_W(t-1)) \equiv 0\pmod{2}$. Fix a general hyperplane $H\subset \PP^n$. Since $D:= W\cap H$ is a quadric hypersurface of
$H$ with rank $\min \{\rho,n\}$, we may apply the inductive assumption to the quadric hypersurface $W\cap H$ of $H$.
Since $h^0(\Oo _W(t))-h^0(\Oo_W(t-1)) = h^0(\Oo_D(t))$, we have $\deg (T)=x-y$. Take a final line $L$ of $Y$. For a general $Y$
we may assume that one of the point,
$o$, of
$L\cap H$ is a general point of $H$. Thus there is a solution
$T\subset D$ for $R(n-1,t)$ containing $o$ with $o$ a smooth point of $T$ contained in a final line $R$ of $T$. Deforming $T$
among the bamboos containing
$o$ we may  assume
$T\cap (Y\cap H)=\{o\}$. Thus $Y\cup T$ is a degree $x$ tree. Since $\{o\}= L\cap R$, $L$ is a final line of $Y$ and
$R$ is a final line of $T$, $Y\cup T$ is a bamboo. Consider the exact sequence of coherent sheaves on
$W$ induced by the residual exact sequence of the Cartier divisor $D$ of $W$:
\begin{equation}\label{eqbe2}
0 \to \Ii _{W\cap Y}(t-1)\to \Ii _{W\cap (Y\cup T)}(t) \to \Ii _{D\cap T,D}(t)\to 0
\end{equation}
Since $h^i(\Ii_{W\cap Y}(t-1)) =0$, $i=0,1$, and $h^i(D,\Ii _{D\cap T,D}(t))=0$, $i=0,1$, the long cohomology exact sequence
of \eqref{eqbe2} shows that $Y\cup T$ satisfies $R(n,t)$.

\quad (b) Assume $h^0(\Oo _W(t-1))$ even and $h^0(\Oo _W(t))$ odd. Thus $h^0(\Oo _D(t))$ is odd. Let $L$ be a final line of
$Y$. For a general
$Y$ we may assume that one of the point,
$o$, of
$L\cap H$ is a general point of $H$. Thus there is a solution
$T\subset D$ for $R(n-1,t)$ containing $o$ with $o$ a smooth point of $T$ contained in a final line $R$ of $T$. Deforming $T$
among the bamboos containing
$o$, we may  assume
$T\cap (Y\cap H)=\{o\}$. Thus $Y\cup T$ is a degree $x$ tree. Let $L_a$ the final line of $T$ different from $R$. Call $a$ a
linking point of
$T$ contained in $L_a$. Recall that we got the point $o\in T$ moving a solution of $R(n,t-1)$. We may move it differently in
such a way that the line containing $o$ is not $L_a$. Now $L_a$ is a final line of $Y\cup T$ and $h^i(W,\Ii _{W\cap (Y\cup
T)\setminus
\{a\})}(t))=0$, $i=0,1$, by \eqref{eqbe2} proving $R(n,t)$ in this case.

\quad ({c}) Assume $h^0(\Oo _W(t)) \equiv
h^0(\Oo_W(t-1)) \equiv 1\pmod{2}$. Call $b\in Y\cap W$ a point belong to a final line $L_b$ of $Y$ and such that
$h^i(\Ii_{W\cap Y\setminus \{b\}}(t-1)) =0$, $i=0,1$. We take the construction of step (a) with $b\notin H$ and $o\notin L_a$.
With these modifications the proof of step (a) shows that $Y\cup T$ satisfies $R(n,t)$.

\quad (d) Assume $h^0(\Oo _W(t-1))$ odd and $h^0(\Oo _W(t))$ even. Thus $h^0(\Oo_W(t)) -h^0(\Oo _W(t-1))$ is odd and $\lceil
(h^0(\Oo_W(t)) -h^0(\Oo _W(t-1)))/2\rceil +y=x+1$. Call $o$ a linking point of
$Y$ and
$L_o$ the final line of
$Y$. Let $H\subset \PP^n$ be
a general hyperplane containing $L_o$. Since
$Y$ is general, we may see $L_o$ as a general line of $H$. Since $H$ is general among the hyperplane
containing $L_o$, $D:= W\cap H$ is a quadric hypersurface of $H$ with rank $\min \{\rho,n\}$. Thus we may apply the inductive
assumption to the quadric hypersurface $D:= W\cap T$.  Thus we may find a solution $T$ of $R(n-1,t)$ with the additional
condition that $L_o$ is a linking line of $T$ with $a$ a linking point, where $\{o,a\}:= L_o\cap W$. Thus $Y\cup T$ is a
degree $x$ bamboo satisfying $R(n,t)$.
\end{proof}

\begin{remark}\label{eb7}
Let $S\subset \PP^n$ be a finite set. It is a well-known and easy exercise that $h^1(\PP_n,\Ii_{S,\PP^n}(t)) =0$ for all
$t\ge
\#S-1$.
\end{remark}

\begin{lemma}\label{eb3}
Fix an integer $n\ge 3$. If $R(n,t)$ is true for all integers $t\ge 2$, then Theorem \ref{eb1} is true in $\PP^n$.
\end{lemma}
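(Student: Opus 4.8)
The plan is to reduce ``maximal rank in every degree'' to maximal rank in two consecutive critical degrees, and then to realize these two vanishings on a \emph{single} bamboo by squeezing it between a solution of $R(n,t-1)$ and a solution of $R(n,t)$. Fix $d\ge n$ and let $t$ be the least integer with $2d\le h^0(\Oo_W(t))$. Since $h^0(\Oo_W(1))=n+1<2n\le 2d$ we have $t\ge 2$, and by minimality $h^0(\Oo_W(t-1))<2d\le h^0(\Oo_W(t))$. I claim it suffices to produce a degree $d$ bamboo $T$ transversal to $W$ with $h^0(W,\Ii_{W\cap T}(t-1))=0$ and $h^1(W,\Ii_{W\cap T}(t))=0$. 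Indeed, Castelnuovo--Mumford's lemma (used already in the proof of Lemma \ref{fe3}) then propagates $h^1(W,\Ii_{W\cap T}(s))=0$ to all $s\ge t$, so the restriction map is surjective there; while for $s\le t-1$ multiplication by a nonzero (hence nonzerodivisor, as $W$ is integral) section of $\Oo_W(t-1-s)$ embeds $H^0(W,\Ii_{W\cap T}(s))$ into $H^0(W,\Ii_{W\cap T}(t-1))=0$, so the restriction map is injective. Hence $W\cap T$ has maximal rank in $W$ in every degree. The injectivity in degree $1$ says that $W\cap T$ spans $\PP^n$, which a degree $d$ bamboo can achieve only when $d\ge n$; this is exactly the hypothesis of Theorem \ref{eb1} (cf. Remark \ref{eb4}).

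If $2d=h^0(\Oo_W(t))$ a single assertion suffices: this is the even case of $R(n,t)$ with $x=d$, producing a degree $d$ bamboo $T$ with $h^0(W,\Ii_{W\cap T}(t))=0$; as $\#(W\cap T)=2d=h^0(\Oo_W(t))$, the degree $t$ restriction map is then an isomorphism, so $h^1(W,\Ii_{W\cap T}(t))=0$ and, by the downward step above, $h^0(W,\Ii_{W\cap T}(t-1))=0$ too. So assume $h^0(\Oo_W(t-1))<2d<h^0(\Oo_W(t))$ and set $x:=\lceil h^0(\Oo_W(t))/2\rceil$, $y:=\lceil h^0(\Oo_W(t-1))/2\rceil$; the inequalities give $y\le d\le x$, and in fact $d\le x-1$ in the odd case of $R(n,t)$. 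Here the two vanishings live in different degrees and pull in opposite directions: enlarging a bamboo can only help the injectivity statement in degree $t-1$, while shrinking it can only help the surjectivity statement in degree $t$. Both moves are the content of Remark \ref{eb2}.

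Concretely, I would fix a solution $T''$ of $R(n,t)$ of degree $x$ and a solution $T'$ of $R(n,t-1)$ of degree $y$, arranged into a chain of bamboos $T'\subseteq T\subseteq T''$ with $\deg T=d$. Since $T'\subseteq T$ and $h^0(W,\Ii_{W\cap T'}(t-1))=0$, sections of $\Oo_W(t-1)$ vanishing on the larger set $W\cap T$ are even fewer, whence $h^0(W,\Ii_{W\cap T}(t-1))=0$. For the other vanishing, recall that $R(n,t)$ furnishes a subset $S'\subseteq W\cap T''$ (equal to $W\cap T''$ in the even case, and $W\cap T''$ with the linking point discarded in the odd case, where one drops the final linking line) with $\#S'=h^0(\Oo_W(t))$ and $h^1(W,\Ii_{S'}(t))=0$; choosing $T$ so that $W\cap T\subseteq S'$ (possible since $d\le x$, resp. $d\le x-1$), surjectivity passes to the smaller set and yields $h^1(W,\Ii_{W\cap T}(t))=0$. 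When $t=2$ the assertion $R(n,1)$ is unavailable (indeed false, Remark \ref{eb4}), but then the injectivity is in degree $1$ and holds automatically for a general $T$, whose $2d\ge 2n$ intersection points with $W$ span $\PP^n$; so only $R(n,2)$ (Lemma \ref{eb5}) is needed.

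The step I expect to be the main obstacle is the existence of the nested chain $T'\subseteq T\subseteq T''$ realizing \emph{both} prescribed vanishings on one degree $d$ bamboo. I would secure it by building the $R(n,t)$ solution $T''$ incrementally from an $R(n,t-1)$ solution $T'$ by attaching final lines meeting $W$ in sufficiently general pairs of points---the mechanism already exploited in the proof of Lemma \ref{eb6}---and then taking $T$ to be the intermediate sub-bamboo of degree $d$. The delicate points are the parity bookkeeping in the odd case, where the discarded final linking line must be chosen disjoint both from $T'$ and from the $d-y$ lines interpolating between $T'$ and $T''$, and the check that the range $y\le d\le x$ (resp. $d\le x-1$) always leaves room to place these attachments while keeping $T$ a bamboo transversal to $W$. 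Once the chain is in place, the cohomological conclusion is immediate from the two inclusions together with the monotonicity established at the outset.
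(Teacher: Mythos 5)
Your reduction of the lemma to two critical degrees is correct and clean: with $t$ minimal such that $2d\le h^0(\Oo_W(t))$, the downward multiplication argument (injectivity for $s\le t-1$ from $h^0(W,\Ii_{W\cap T}(t-1))=0$, using that $W$ is integral) and the upward Castelnuovo--Mumford propagation (surjectivity for $s\ge t$ from $h^1(W,\Ii_{W\cap T}(t))=0$) together give maximal rank in all degrees, and your even-case shortcut $2d=h^0(\Oo_W(t))$ is fully correct. Your per-degree realizations also match the paper's mechanism (Remark \ref{eb2}): extending a solution of $R(n,t-1)$ by arbitrary lines preserves the $h^0$-vanishing, and truncating a solution of $R(n,t)$ (discarding the final linking line in the odd case) preserves the $h^1$-vanishing. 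The gap is in the glue: you need \emph{one} degree $d$ bamboo with \emph{both} vanishings, and you propose to get it from a nested chain $T'\subseteq T\subseteq T''$ with $T'$ a solution of $R(n,t-1)$ and $T''$ a solution of $R(n,t)$. But the hypothesis of the lemma is only that $R(n,t)$ \emph{holds} for each $t\ge 2$; it asserts the existence of some solutions with no containment relation between solutions at consecutive degrees, so the chain is not available. Your fallback---rebuilding $T''$ from $T'$ ``by attaching final lines meeting $W$ in sufficiently general pairs of points''---does not work as stated: an attached line must pass through a point $p$ of the previous final line, so its two intersection points with $W$ are conjugate under the degree $2$ projection from $p$, i.e.\ the second point is determined by the first rather than free. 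Proving that such constrained pairs keep imposing independent conditions in degree $t$ is precisely the content of the inductive proof of $R(n,t)$: Lemma \ref{eb6} achieves it by induction on $n$, attaching a whole bamboo inside a general hyperplane $H$ and invoking $R(n-1,t)$ on $D=W\cap H$---an assertion not among the hypotheses of this lemma (fixed $n$)---and Lemma \ref{rnn0} is exactly the $t=2$ instance of the same difficulty (the drop by $2$ can fail for cones). So your chain construction amounts to reproving $R(n,t)$ rather than using it as a black box.

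The repair is short and is what the paper does: the locus $B(n,d)'$ of degree $d$ bamboos transversal to $W$ is an irreducible quasi-projective variety, and by semicontinuity each of the two conditions $h^0(W,\Ii_{W\cap T}(t-1))=0$ and $h^1(W,\Ii_{W\cap T}(t))=0$ is open on it; each is nonempty by your own per-degree constructions from the separate solutions of $R(n,t-1)$ and $R(n,t)$. Hence a general $T\in B(n,d)'$ satisfies both simultaneously, with no need for any containment between the two solutions. With this replacement your argument closes, and it is in fact slightly leaner than the paper's proof, which verifies maximal rank degree by degree for all $2\le t\le 2d-2$ (using Remark \ref{eb7} to dispose of $t\ge 2d-1$ and the residual sequence \eqref{eqeb3} for $t=1$, as you also do) before intersecting finitely many dense open subsets of $B(n,d)'$.
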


\begin{proof}
Fix an integer $d\ge n$. Let $B(n,d)'$ denote the set of all $T\in B(n,d)$ transversal to $W$. $B(n,d)'$ is a non-empty
open subset of the irreducible quasi-projective variety $B(n,d)$. Since for any $t$ the restriction map $H^0(\Oo_{\PP^n}(t))
\to H^0(\Oo_W(t))$ is surjective, it is sufficient to prove that a general $T\in B(n,d)$ has the property that $W\cap T$
has maximal rank with respect to the line bundle $\Oo_W(t)$ for all $t>0$. 

Since $d\ge n$ a general $T\in B(n,d)'$ spans $\PP^n$, i.e.
$h^0(\PP^n,\Ii _T(1)) =0$. Thus we have the residual exact sequence
\begin{equation}\label{eqeb3}
0 \to \Ii_{T,\PP^n}(-1)\to \Ii_{T,\PP^n}(1)\to  \Ii _{T\cap W,W}(1)\to 0
\end{equation}
Since $T$ is a reduced curve, $h^0(\Oo_T(-1)) =0$. Thus \eqref{eqeb3} gives $h^0(W,\Ii _{T\cap W,W}(1))=0$. Thus $T\cap W$
has maximal rank with respect to the line bundle $\Oo_W(1)$. Thus it is sufficient to handle $W\cap T$ with respect to all
line bundles $\Oo_W(t)$, $t\ge 2$. For any $Y\in B(n,d)'$ the scheme $W\cap Y$ is a set with cardinality $2d$.
By Remark \ref{eb7} we have $h^1(W,\Ii_{W\cap Y,W}(t)) =0$ for all $t\ge 2d-1$. Thus it is sufficient to check finitely many
line bundles $\Oo_W(t)$, $2\le t\le 2d-2$. Since $B(n,d)'$ is irreducible, the intersection of finitely many non-empty open
subsets of $B(n,d)'$ is non-empty and hence open and dense in $B(n,d)'$. Thus it is sufficient to handle a single $\Oo_W(t)$
with $t\ge 2$. Set $x:= \lceil h^0(\Oo_W(t)/2\rceil$. Take a solution $X$ of $R(n,d)$ and fix an admissible ordering of the
irreducible components $L_1,\dots ,L_x$ of $X$. If $h^0(\Oo_W(t))$ is odd assume that $L_d$ is a linking line and fix a
linking point $o\in L_d\cap W$.

First assume $h^0(\Oo_W(t))$ even. If $d =x$ we may take $X$ as $T$, because $h^i(W,\Ii_{X\cap W,W}(t))=0$, $i=0,1$. If $d>x$
we may take as $T$ any element of $B(n,d)'$ containing $X$, because $h^0(W,\Ii_{X\cap W,W}(t))=0$ implies $h^0(W,\Ii_{T\cap
W,W}(t))=0$. If $d<x$ we may take as $T$ any degree $d$ bamboo contained in $X$, because $h^1(W,\Ii_{X\cap W,W}(t))=0$ implies
$h^1(W,\Ii_{T\cap W,W}(t))=0$.

Now assume $h^0(\Oo_W(t))$ odd. If $d =x$ we may take $X$ as $T$, because $h^i(W,\Ii_{X\cap W,W}(t))=0$, $i=0,1$. If $d>x$
we may take as $T$ any element of $B(n,d)'$ containing $X$, because $h^0(W,\Ii_{X\cap W,W}(t))=0$ implies $h^0(W,\Ii_{T\cap
W,W}(t))=0$. Now assume $d<x$. Set $E:= L_1\cup \cdots \cup L_{d-1}$. We have $h^1(W,\Ii_{E\cap W,W}(t))=0$ by $R(n,t)$. Hence
we may take as $T$ any degree $d$ bamboo contained in $E$.
\end{proof}

\begin{proof}[Proof of Theorem \ref{eb1}:]
Since the case $n=3$ is true by Proposition \ref{be1}, we may assume $n\ge 4$. Use Lemmas \ref{eb3} and \ref{eb7}.
\end{proof}

\begin{proof}[Proof of Proposition \ref{fe1}:]
By the semicontinuity theorem for cohomology  is sufficient to prove the proposition for a very specific degree $k$ hypersurface: a hyperplane $H$ counted with
multiplicity $k$. Call $M_e$, $1\le e\le k$, the hyperplane counted with multiplicity $e$. For a general $Y$ the set $S:=
Y\cap H$ is a general subset of $H$ with cardinality $d$. Thus $h^0(H,\Ii_{S,H}(x))=\max \{\binom{n+x-1}{n-1}-d,0\}$
and  $h^1(H,\Ii_{S,H}(x))=\max \{d-\binom{n+x-1}{n-1},0\}$. Then we use the exact sequences
$$0\to \Oo_H(-e-1)\to\Oo_{M_{e+1}}\to\Oo_{M_e}\to 0$$and induction on the integer $e$.\end{proof}

\section{Proof of Theorem \ref{be2}}

In this section we work in $\PP^3$ and prove Theorem \ref{be2}. Let $W\subset \PP^3$ be a smooth cubic surface. Take a
general smooth quadric $Q\subset \PP^3$. Bertini's theorem gives that $D:= Q\cap W$ is a
smooth element of $|\Oo_Q(3,3)|$. Since $W$ is general, $D$ is a general element of $|\Oo_Q(3,3)|$. Thus $D$ is a general
smooth curve of genus $4$ canonically embedded in $\PP^3$.

Since $h^1(\Oo_{\PP^3}(t-3)) =0$ for all $t\in \ZZ$ and $h^2(\Oo_{\PP^n}(t-3)) =0$ for all $t\ge 0$, a standard exact sequence
gives
$h^0(\Oo_W(t)) = \binom{t+3}{3} -\binom{t}{3}$ and $h^1(\Oo_W(t-2))=0$ for all $t\ge 1$. We also have $h^0(\Oo_D(t)) =
6t-3$ for all $t\ge 4$. Thus $h^0(\Oo_W(t)) -h^0(\Oo_W(t-2)) = h^0(\Oo _D(t))$ for all $t\ge 2$.
Set $x_t:= \lfloor h^0(\Oo_W(t))/3\rfloor$. Note that $h^0(\Oo_W(t)) \equiv 1 \pmod{3}$ for all $t\ge 0$, i.e.
$h^0(\Oo_W(t))=3x_t +1$ for all $t\ge 0$. Since $h^0(\Oo_W(t)) -h^0(\Oo_W(t-2)) = 6t-2$ for all
$t\ge 2$, we have $x_t =x_{t-2}+2t-1$ for all $t\ge 4$.  For any positive integer $d$ let $T(3,d)'$ denote the set of all
$Y\in T(3,d)$ transversal to
$W$. For any type $\tau$ for degree $d$ trees set $T(3,d,\tau)':= T(3,d)'\cap T(3,d,\tau)$. Call $B(3,d)$ the set of all
degree $d$ bamboos and set $B(3,d)':= B(3,d)\cap T(3,d)'$.

\begin{remark}\label{eeb1}
For any $Y\in T(3,d)'$
the scheme $Y\cap W$ is the union of $3d$ points. Thus $Y\cap W$ has maximal rank in $W$ if and only if $h^0(W,\Ii_{W\cap
Y}(t))=0$
for all $t$ such that $x_t<d$ and $h^1(W,\Ii_{W\cap Y}(t))=0$ if $d\le x_t$.
\end{remark}
Consider the following assertion $H(t)$, $t\ge 3$:

\quad {\bf Assertion} $H(t)$, $t\ge 3$: There is a type $\tau$ for degree $x_t$ trees such that $h^1(W,\Ii _{W\cap Y}(t)) =0$
for a general
$Y\in T(3,x_t,\tau)'$.

Note that Assertion $H(t)$ is true if and only if $h^0(W,\Ii _{W\cap Y}(t)) =1$ for a general $Y\in T(3,x_t,\tau)'$. By the
semicontinuity theorem for cohomology $H(t)$ is true if and only if there is at least one $Y\in T(3,x_t,\tau)'$.

\begin{remark}\label{be3}
Let $T\subset \PP^3$ be any degree $3$ tree. Since $\deg (T)\le 3$, $T$ is a bamboo. Since $\deg (T)=3$ and $p_a(T)=0$, $T$
spans $\PP^3$. Thus $h^1(\Ii_T(1)) =0$. Since $h^2(\Ii_T) =h^1(\Oo_T)=0$, the Castelnuovo-Mumford's lemma gives $h^1(\Ii_T(t))
=0$ for all $t\ge 5$.
\end{remark}

\begin{lemma}\label{be4}
Fix a type $\tau$ for degree $4$ trees. Let $T\subset \PP^3$ be a general degree $4$ tree with type $\tau$. Then
$h^1(\Ii_T(t))=0$ for all $t=2$.
\end{lemma}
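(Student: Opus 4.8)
The plan is to translate the vanishing $h^1(\Ii_T(2))=0$ into the statement that a general degree $4$ tree of type $\tau$ lies on \emph{exactly one} quadric, and then to build such a tree one line at a time using Lemma \ref{rnn0}.

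First I would fix the numerics. Since $T$ is a connected nodal curve with $p_a(T)=0$ and $\deg T=4$, the sheaf $\Oo_T(2)$ has degree $8$, so $\chi(\Oo_T(2))=9$; as $\Oo_T(2)$ has positive degree on every component of the tree $T$ one has $h^1(\Oo_T(2))=0$, whence $h^0(\Oo_T(2))=9$. Plugging this into
$$0\to \Ii_T(2)\to \Oo_{\PP^3}(2)\to \Oo_T(2)\to 0$$
and using $h^1(\Oo_{\PP^3}(2))=0$ gives $h^1(\Ii_T(2))=h^0(\Ii_T(2))-1$. Because $h^0(\Oo_T(2))=9<10=h^0(\Oo_{\PP^3}(2))$, the restriction map can never be injective, so $h^0(\Ii_T(2))\ge 1$ for every such $T$. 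Thus the lemma is equivalent to $h^0(\Ii_T(2))=1$, and by upper semicontinuity of $h^0(\Ii_{(\cdot)}(2))$ on the irreducible family $T(3,4,\tau)$ it suffices to exhibit a single tree of type $\tau$ carried by a unique quadric.

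Next I would construct the tree. Order the components $L_1,\dots,L_4$ admissibly for $\tau$ and set $W_i:=H^0(\Ii_{L_1\cup\cdots\cup L_i}(2))$. Restriction to a line being surjective gives $\dim W_1=10-3=7$. For $i\ge 2$ the line $L_i$ passes through a point $q_i$ of the already-placed line $L_{\tau(i)}$, and $L_{\tau(i)}$ lies in the base locus of $W_{i-1}$; so I apply Lemma \ref{rnn0} with this $W_{i-1}$, taking $T=L_{\tau(i)}$, $o=q_i$ and $L=L_i$. Choosing $q_i$ general on $L_{\tau(i)}$ and $L_i$ general through $q_i$ — precisely the freedom available when building a general tree of type $\tau$ — the lemma yields $\dim W_i=\max\{0,\dim W_{i-1}-2\}$, provided its exceptional alternative can be excluded.

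The crux, and the only step that can go wrong, is excluding that alternative: that every quadric in $|W_{i-1}|$ be a cone whose vertex contains $L_{\tau(i)}$. At each step $i\le 4$ the subtree $L_1\cup\cdots\cup L_{i-1}$ has degree at most $3$, hence is a bamboo, and a bamboo of degree $\le 3$ lies on a \emph{smooth} quadric (a line; two lines meeting, one from each ruling; or a $3$-chain with its two end lines in one ruling and the middle line in the other). Since $W_{i-1}$ then contains the class of a smooth quadric, which is not a cone, the exceptional case is ruled out at every step. Therefore $\dim W_1,\dots,\dim W_4=7,5,3,1$, so the constructed tree of type $\tau$ satisfies $h^0(\Ii_T(2))=1$; by the semicontinuity reduction above the general tree of type $\tau$ does too, giving $h^1(\Ii_T(2))=0$.
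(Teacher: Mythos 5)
Your proof is correct, and it takes a genuinely different route from the paper's. You both reduce, via the exact sequence $0\to \Ii_T(2)\to \Oo_{\PP^3}(2)\to \Oo_T(2)\to 0$ and semicontinuity on the irreducible family $T(3,4,\tau)$, to exhibiting one tree of type $\tau$ with $h^0(\Ii_T(2))=1$; but from there the paper argues by explicit degeneration, using that up to relabeling there are only two isomorphism classes of degree $4$ trees: for the non-bamboo (spreading) type it places $T\in |\Oo_Q(1,3)|$ on a smooth quadric $Q$, where $h^0(\Oo_Q(1,-1))=0$ forces $Q$ to be the unique quadric through $T$, and for the bamboo it places a chain inside a union of two planes $H\cup M$ and checks $|\Ii_T(2)|=\{H\cup M\}$ directly. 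You instead run a uniform induction, attaching one line at a time and dropping $\dim W_i$ by $2$ at each step via Lemma \ref{rnn0}, ruling out the cone alternative because the degree $\le 3$ subtree built so far lies on a smooth quadric (which is sound: every degree $\le 3$ tree is a chain with disjoint end lines, and the net of quadrics through such a chain has a smooth member --- your ruling realization suffices since your subtrees are general of their type). Your approach buys uniformity in $\tau$ (no appeal to the two-class classification of degree $4$ trees, which the paper's dichotomy ``not a bamboo, i.e. $\tau(i)=1$ for all $i$'' uses implicitly) and it generalizes to higher degree and to $\PP^n$; indeed it is essentially the induction the paper invokes without details in the proof of Lemma \ref{eb5} for assertion $R(n,2)$, whereas the paper's constructions here are shorter and completely explicit. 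One caveat: the paper's own proof opens with a Castelnuovo--Mumford reduction, indicating the lemma is meant to give $h^1(\Ii_T(t))=0$ for all $t\ge 2$ (the stated ``for all $t=2$'' is a typo); if so, append the one-line regularity argument, which is immediate since $h^2(\Ii_T(t))=h^1(\Oo_T(t))=0$ for $t\ge 0$.
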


\begin{proof}
By the Castelnuovo-Mumford's lemma it is sufficient to prove the case $t=2$.

First assume that $T$ is not a bamboo, i.e. assume
$\tau(i)=1$ for all
$i=2,3,4$. By semicontinuity it is sufficient to find one tree
$Y$ with type $\tau$ and $h^1(\Ii_Y(2)) =0$, i.e. with $h^0(\Ii_Y(2))=0$. Fix a smooth quadric $Q\subset \PP^3$. Fix $L_1\in
|\Oo Q(1,0)|$ and $3$ distinct elements $L_i\in |\Oo_Y(0,1)|$, $i=2,3,4$. Set $T:=L_1\cup L_2\cup L_3\cup L_4\in
|\Oo_Y(1,3)|$. $T$ is a tree of type $\tau$. Since $h^1(\Oo_{\PP^3})=0$ and $h^0(\Oo_Q(-1,1))=0$, $Q$ is the only quadric
containing $T$.

Now assume that $\tau$ is a bamboo. Take two different planes $H$ and $M$. Fix general lines $L_1, L_2$ of $H$ and a general
line $L_4$ of $M$. Let $L_3\subset M$ a general line containing the point $L_3\cap M$. Note that $T:= L_1\cup L_2\cup L_3\cup
L_4$ is a bamboo contained in $H\cup M$. Take any $W\in |\Ii_T(2)|$. Since $W_{|M}$ contains $L_3\cup L_4\cup \{L_1\cap M\}$,
$M$ is an irreducible component of $W$. Similarly, $H\subset W$. Thus $|\Ii_T(2)|=\{H\cup M\}$.
\end{proof}

\begin{lemma}\label{be4.1}
Fix a general $Y\in B(3,5)'$. We have $h^0(W,\Ii_{W\cap Y,W}(x)) =0$ for all $x\le 2$ and $h^1(W,\Ii_{W\cap Y,W}(t))=0$ for
all
$t\ge 3$. 
\end{lemma}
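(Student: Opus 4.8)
The plan is to reduce the whole statement to three cohomological properties of the bamboo $Y$ alone in $\PP^3$, and then verify those. Write $S := W\cap Y$, a set of $15$ points. Since $W$ is a cubic and $Y$ is transversal to $W$ (so the $15$ points are smooth points of $Y$ and the nodes of $Y$ avoid $W$), $S$ is the reduced Cartier divisor $\op{div}(F|_Y)$ cut on $Y$ by an equation $F$ of $W$; hence $S\in|\Oo_Y(3)|$ and $\Ii_{S,Y}\cong\Oo_Y(-3)$. This yields on $\PP^3$ the exact sequence
$$0\to \Ii_{Y}(t)\to \Ii_{S}(t)\to \Oo_Y(t-3)\to 0.$$
Because $H^0(\Oo_{\PP^3}(t))\to H^0(\Oo_W(t))$ is an isomorphism for $t\le 2$ and surjective for all $t\ge 0$, one has $h^0(W,\Ii_{S,W}(t))=h^0(\PP^3,\Ii_{S}(t))$ for $t\le 2$ and $h^1(W,\Ii_{S,W}(t))=h^1(\PP^3,\Ii_{S}(t))$ for all $t\ge 0$. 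For $t\le 2$ we have $H^0(\Oo_Y(t-3))=0$, so $h^0(W,\Ii_{S,W}(t))=h^0(\PP^3,\Ii_Y(t))$; for $t\ge 3$ we have $H^1(\Oo_Y(t-3))=0$, so $h^1(\PP^3,\Ii_{S}(t))$ is a quotient of $h^1(\PP^3,\Ii_Y(t))$.

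Thus it suffices to prove, for a general degree $5$ bamboo $Y$: (A) $h^0(\Ii_Y(1))=0$; (B) $h^0(\Ii_Y(2))=0$; and (C) $h^1(\Ii_Y(t))=0$ for all $t\ge 3$. Property (A) is immediate, since a general degree $5$ tree spans $\PP^3$. For (B), a general degree $4$ sub-bamboo $Y'=L_1\cup\cdots\cup L_4$ satisfies $h^1(\Ii_{Y'}(2))=0$ by Lemma \ref{be4}, so $h^0(\Ii_{Y'}(2))=1$ and $Y'$ lies on a unique quadric $Q'$; a general line $L_5$ through a general point $p\in L_4$ is not a ruling of $Q'$ through $p$, hence $L_5\not\subset Q'$, so $Q'\not\supset Y$ and $h^0(\Ii_Y(2))=0$. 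For (C), Castelnuovo--Mumford regularity reduces everything to $t=3$: once $h^1(\Ii_Y(3))=0$, the vanishings $h^2(\Ii_Y(2))=h^1(\Oo_Y(2))=0$ and $h^3(\Ii_Y(1))=h^2(\Oo_Y(1))=0$ make $\Ii_Y$ $4$-regular, giving $h^1(\Ii_Y(t))=0$ for all $t\ge 3$.

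The crux is therefore $h^1(\Ii_Y(3))=0$ for a general degree $5$ bamboo, and I expect this to be the main obstacle. I would prove it by exhibiting a single bamboo with this vanishing and invoking upper semicontinuity of $h^1$ over the irreducible family $B(3,5)$. Take $Y=Y'\cup L_5$ with $Y'$ a general degree $4$ bamboo, $H$ a general plane (so $Y'$ is transversal to $H$), and $L_5\subset H$ a general line through $p:=L_4\cap H$. A local computation at $p$ shows $\Res_H(Y)=Y'$ with no embedded point, while the trace $Y\cap H=L_5\cup\{L_1\cap H,\,L_2\cap H,\,L_3\cap H\}$ is the line $L_5$ together with three points in $H\cong\PP^2$. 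The residual exact sequence for $H$ then reads
$$0\to \Ii_{Y'}(2)\to \Ii_Y(3)\to \Ii_{Y\cap H,\,H}(3)\to 0.$$
Here $h^1(\Ii_{Y'}(2))=0$ by Lemma \ref{be4}, and $h^1(H,\Ii_{Y\cap H,H}(3))=0$ by an elementary count in $\PP^2$ (cubics through $L_5$ correspond to conics, on which three points impose independent conditions, so $h^0=3=\chi$). The long exact sequence forces $h^1(\Ii_Y(3))=0$.

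The value of this particular degeneration is that placing the final line \emph{inside} the plane $H$ makes the residual scheme exactly the degree $4$ tree $Y'$, which is handled by Lemma \ref{be4}, and makes the trace on $H$ a line plus three points with vanishing $h^1$; both outer terms of the sequence then have $h^1=0$ and no connecting-map analysis is required. The steps needing care are the scheme-theoretic identification $\Res_H(Y)=Y'$ at the attaching node $p$, and the transversality of $Y$ to $W$ ensuring that $S$ is a reduced divisor in $|\Oo_Y(3)|$ so that the opening sequence is valid.
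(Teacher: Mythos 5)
Your proof is correct, and its overall skeleton matches the paper's: both arguments reduce the statement on $W$ to cohomology of $Y$ alone in $\PP^3$ (your two sequences through $\Ii_S$ carry the same information as the paper's single residual sequence $0\to\Ii_{Y}(t-3)\to\Ii_Y(t)\to\Ii_{W\cap Y,W}(t)\to 0$), and your proof of $h^0(\Ii_Y(2))=0$ --- a general degree $4$ bamboo lies on a unique quadric by Lemma \ref{be4}, and a general line attached at a general point of $L_4$ escapes that quadric --- is exactly the paper's step. The genuine divergence is at the crux, $h^1(\Ii_Y(3))=0$. The paper degenerates to a bamboo $F=L_1\cup\cdots\cup L_5$ with $L_1,L_3,L_5$ in one ruling of a smooth quadric $Q$ and residuates with respect to $Q$, so that the outer terms become $\Ii_{L_2\cup L_4}(1)$ (two general lines, trivially $h^1=0$) and $\Ii_{L_1\cup L_3\cup L_5,Q}(3)\cong\Oo_Q(0,3)$; no appeal to Lemma \ref{be4} is needed there. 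You instead put only the final line $L_5$ inside a general plane $H$ through $p=L_4\cap H$ and residuate with respect to $H$, so the residual is the degree $4$ bamboo $Y'$ handled by Lemma \ref{be4} and the trace is $L_5\cup\{q_1,q_2,q_3\}$ handled by a count in $\PP^2$; your local verification that $\Res_H(Y)=Y'$ with no embedded point at the node $p$, and the fact that three distinct points always impose independent conditions on conics, are both correct, and semicontinuity over the irreducible family $B(3,5)$ closes the argument. Amusingly, your degeneration is the one the paper itself uses one degree later: Lemma \ref{be5} proves $h^1(\Ii_T(3))=0$ for a degree $6$ bamboo by placing a reducible conic in a plane and residuating, again resting on Lemma \ref{be4}. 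The trade-off is minor: the paper's quadric degeneration keeps both outer terms elementary and independent of Lemma \ref{be4}, while yours reuses Lemma \ref{be4}, requires the slightly more delicate residual/trace analysis at the attaching node, and has the virtue of unifying the degree $5$ and degree $6$ cases. Your treatment of $t\ge 4$ via $4$-regularity of $\Ii_Y$ in $\PP^3$ is also valid and equivalent to the paper's Castelnuovo--Mumford step applied on $W$.
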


\begin{proof}
Since $W\cap Y$ spans $\PP^3$, $h^0(W,\Ii_{W,\PP^3}(1)) =0$. Fix a general $E\in B(3,4)$. Lemma \ref{be4} gives
$h^0(\PP^3,\Ii _{E,\PP^3}(2))=1$. Adding a line to $E$ we get a degree $5$ bamboo $Y$ such that $h^0(\PP^3,\Ii _{Y,\PP^3}(2))=0$. Since $h^1(\PP^3,\Ii
_{Y,\PP^3})=0$, the residual exact sequence of $W$ gives $h^0(W,\Ii_{W\cap Y,W}(2)) =0$. 

\quad {\bf Claim:} $h^1(\PP^3,\Ii_{Y,\PP^3}(3))=0$.

\quad {\bf Proof of the Claim:} Let $Q\subset \PP^3$ be a smooth quadric. Take $3$ distinct elements $L_1,L_3,L_5$
of $|\Oo_Q(1,0)|$. Let $L_2$ be a general line of $\PP^3$ intersecting both $L_1$ and $L_3$. Let $L_4$  be a general line of
$\PP^3$ intersecting both $L_5$ and $L_3$. Set $F:= L_1\cup L_2\cup L_3\cup L_4\cup L_5$. Since $F\in B(3,5)$, by
semicontinuity  to prove the claim it is sufficient to prove that $h^1(\PP^3,\Ii_{F,\PP^3}(3))=0$. Note that $F\cap Q=L_1\cup
L_3\cup L_5$ scheme-theoretically. Thus the residual exact sequence of $Q$ gives the following exact sequence
\begin{equation}\label{eqbe4.1}
0 \to \Ii_{L_2\cup L_4,\PP^3}(1) \to \Ii _{F,\PP^3}(3)\to \Ii_{L_1\cup L_3\cup L_5,Q}(3)\to 0
\end{equation}
Use $h^1(\PP^3,\Ii _{L_2\cup L_4,\PP^3}(1))=0$, $h^1(Q,\Ii_{L_1\cup L_3\cup L_5,Q}(3))=h^1(Q,\Oo_Q(0,3)) =0$ and the
cohomology exact sequence of \eqref{eqbe4.1}.

By the Castelnuovo-Mumford's lemma to prove the $h^1$-vanishing
it is sufficient to prove that $h^1(W,\Ii_{Y\cap W}(3)) =0$, which is true by the residual exact sequence of $W$, because
$h^1(\PP^3,\Ii_{Y,\PP^3})=h^1(\PP^3,\Ii_{Y,\PP^3}(3))=0$.
\end{proof}

\begin{lemma}\label{be5}
There is a degree $6$ bamboo $T\subset \PP^3$ such that $h^1(\Ii_T(3)) =0$.
\end{lemma}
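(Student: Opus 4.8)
The plan is to produce the degree $6$ bamboo by attaching one extra line to a degree $5$ bamboo that is already under control. Lemma~\ref{be4.1} (more precisely, the explicit curve $F$ built in the proof of its Claim) exhibits a degree $5$ bamboo $Y\subset \PP^3$ with $h^1(\PP^3,\Ii_{Y,\PP^3}(3))=0$. I would fix such a $Y$, choose a final line of $Y$ together with a general point $p$ on it, and let $L$ be a general line of $\PP^3$ through $p$. For general $p$ and $L$ the point $p$ lies on no other component of $Y$ and $L$ meets $Y$ only at $p$, transversally; hence $T:=Y\cup L$ is a connected nodal genus-$0$ curve whose components form a chain, i.e. a degree $6$ bamboo, and $Y\cap L=\{p\}$ scheme-theoretically.

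Next I would invoke the standard exact sequence governing the addition of the component $L$. Restricting $\Ii_{Y,\PP^3}$ to $L$ gives the sheaf sequence
\begin{equation*}
0\to \Ii_{T,\PP^3}\to \Ii_{Y,\PP^3}\to \Ii_{Y\cap L,L}\to 0 ,
\end{equation*}
valid because $\Ii_{T,\PP^3}=\Ii_{Y,\PP^3}\cap \Ii_{L,\PP^3}$ and the image of $\Ii_{Y,\PP^3}$ under $\Ii_{Y,\PP^3}\into \Oo_{\PP^3}\onto \Oo_{L}$ is exactly the ideal sheaf of $Y\cap L$ in $L$. Twisting by $\Oo_{\PP^3}(3)$ and using $Y\cap L=\{p\}$, the rightmost term becomes $\Ii_{p,L}(3)\cong \Oo_{\PP^1}(2)$.

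The conclusion then follows from the long cohomology exact sequence
\begin{equation*}
H^1(\PP^3,\Ii_{Y,\PP^3}(3))\to H^1(\PP^3,\Ii_{T,\PP^3}(3))\to H^1(L,\Oo_{\PP^1}(2)) ,
\end{equation*}
in which the left-hand term vanishes by Lemma~\ref{be4.1} and the right-hand term vanishes since $\deg \Oo_{\PP^1}(2)\ge -1$. Hence $h^1(\PP^3,\Ii_{T,\PP^3}(3))=0$, which is the asserted vanishing for a degree $6$ bamboo.

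I do not anticipate a genuine obstacle. The substantive input, namely the existence of a degree $5$ bamboo with $h^1(\Ii(3))=0$, is already supplied by Lemma~\ref{be4.1}, so no new vanishing must be established from scratch; the argument merely transports it along one more node. The only points needing verification are the elementary genericity facts ensuring that $T$ is a bamboo and that $Y\cap L$ is a single reduced point, and the routine identification of the cokernel sheaf with $\Ii_{p,L}$. If anything is delicate, it is checking that a general line $L$ through a general $p$ on a final line of $Y$ avoids the remaining components of $Y$; this is immediate in $\PP^3$ since $p$ lies off the other finitely many lines and a general line through such a $p$ meets none of them.
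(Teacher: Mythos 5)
Your short exact sequence
\begin{equation*}
0\to \Ii_{T,\PP^3}(3)\to \Ii_{Y,\PP^3}(3)\to \Ii_{Y\cap L,L}(3)\to 0
\end{equation*}
is correct, but you then read its long cohomology sequence backwards, and this is where the proof breaks. Here $\Ii_{T}(3)$ is the \emph{subsheaf} and $\Ii_{p,L}(3)\cong \Oo_{\PP^1}(2)$ the quotient, so the relevant portion of the long exact sequence is
\begin{equation*}
H^0(\Ii_{Y}(3))\to H^0(\Ii_{p,L}(3))\to H^1(\Ii_{T}(3))\to H^1(\Ii_{Y}(3)) ,
\end{equation*}
not the three-term piece you wrote (there is no segment $H^1(\Ii_Y(3))\to H^1(\Ii_T(3))\to H^1(\Oo_{\PP^1}(2))$ for this sequence; that would come from a sequence with $\Ii_Y$ as the subsheaf, but $\Ii_T\subset \Ii_Y$). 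From the correct sequence, $h^1(\Ii_Y(3))=0$ only yields $h^1(\Ii_T(3))\le h^0(\Oo_{\PP^1}(2))=3$. To conclude you must additionally prove that the restriction map $H^0(\Ii_{Y}(3))\to H^0(\Ii_{p,L}(3))\cong \CC^3$ is surjective, i.e.\ that the added line $L$ imposes three independent conditions on the cubics through $Y$. This is not a formality: by Lemma \ref{be4.1} one has $h^0(\Ii_Y(3))=20-16=4$, and surjectivity is equivalent to $h^0(\Ii_T(3))=1$, which (by $\chi(\Ii_T(3))=1$) is \emph{exactly} the statement of the lemma being proved. So as written the argument is circular: the genuinely new content — that a general line through a general point of a final line of $Y$ drops $h^0$ by three — is the whole lemma, and nothing in Lemma \ref{be4.1} supplies it. A statement of this kind is in the spirit of Lemma \ref{rnn0}, but the paper proves that only for quadrics; the cubic analogue for the web $|\Ii_Y(3)|$ would have to be established separately (one would have to exclude, e.g., that all cubics through $Y$ contain every such line, or behave like cones at $p$).

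The paper sidesteps this difficulty by a different degeneration: instead of adding one line to a degree $5$ bamboo, it specializes two of the six lines to a reducible conic $L_5\cup L_6$ inside a plane $H$, attaches a general degree $4$ bamboo $Y$ meeting the conic, and uses the residual exact sequence of the divisor $H$,
\begin{equation*}
0\to \Ii_{Y}(2)\to \Ii_{T}(3)\to \Ii_{T\cap H,H}(3)\to 0 ,
\end{equation*}
in which $\Ii_T(3)$ sits in the \emph{middle}. Then the vanishing follows cleanly from $h^1(\Ii_Y(2))=0$ (Lemma \ref{be4}) and $h^1(H,\Ii_{T\cap H,H}(3))=0$ (the trace on $H$ is the conic plus three general points). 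If you want to rescue your one-line-at-a-time approach, you must prove the three-conditions surjectivity directly; the residual-with-respect-to-a-plane argument exists precisely because that surjectivity is not free.
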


\begin{proof}
Fix a plane $H\subset \PP^3$ and a general reducible conic $L_5\cup L_6\subset H$.  Fix a general $o\in L_4$.
Let $Y:= L_1\cup L_2\cup L_3\cup L_4$ be a general degree $3$ bamboo containing $o$. Thus $T:= Y\cup L_5\cup L_6$ is a degree
$6$
line bundle. Consider the residual exact sequence of $H$:
\begin{equation}\label{eqbe1}
0\to \Ii_Y(2)\to \Ii_T(3) \to \Ii_{T,H}(3)\to 0
\end{equation}
Since $T\cap H$ is the union of $L_5\cup L_6$ and $3$ general points of $H$, we have $h^0(H,\Ii_{T\cap H,H}(3)=0$,i.e.
$h^1(H,\Ii_{T\cap H,H}(3))=0$. Lemma \ref{be4} gives $h^1(\Ii_Y(2))=0$. Use the long cohomology exact sequence of
\eqref{eqbe1}.
\end{proof}

\begin{lemma}\label{be6}
Let $W\subset \PP^3$ be any degree $3$ surface (even reducible or with multiple components). Let $Y\subset \PP^3$ be a general
degree $6$ bamboo and $X\subset \PP^3$ be a general degree $7$ bamboo. The $\dim Y\cap W = \dim X\cap W =0$, $h^1(W,\Ii_{Y\cap
W,W}(3)) =0$ and $h^0(W,\Ii_{W\cap X,W}(3))=0$.
\end{lemma}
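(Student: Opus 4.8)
The plan is to let the cubic $W$ enter only through one residual exact sequence and to reduce everything else to cohomology of the bamboo inside $\PP^3$, which is insensitive to $W$. Write $f$ for a cubic form cutting out $W$; the computation preceding Remark \ref{eeb1} gives $h^0(\Oo_W(3))=\binom{6}{3}-\binom{3}{3}=19$ for every cubic, reduced or not. I would first secure the genericity needed: the lines contained in a fixed cubic surface form a subvariety of the Grassmannian of dimension at most $2$ (equality only when $W$ contains a plane), whereas in building a bamboo each new component is only constrained to meet the previous one and hence moves in a family of dimension at least $3$; therefore a general degree $6$ (resp. $7$) bamboo has no component lying in $W$, so $\dim(Y\cap W)=\dim(X\cap W)=0$. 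For such a reduced nodal, hence Cohen--Macaulay, curve $Z$ the form $f$ is a nonzerodivisor on $\Oo_Z$, the residual scheme of $Z$ along $W$ is $Z$ itself, and one obtains the exact sequence
\[
0\to \Ii_{Z,\PP^3}(t-3)\to \Ii_{Z,\PP^3}(t)\to \Ii_{Z\cap W,W}(t)\to 0,
\]
valid for an arbitrary cubic $W$; I will apply it with $t=3$.

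For the degree $6$ statement I take $Y$ general in $B(3,6)$. By Lemma \ref{be5} there is a degree $6$ bamboo with $h^1(\Ii_{T,\PP^3}(3))=0$, a condition in $\PP^3$ independent of $W$, so by semicontinuity $h^1(\Ii_{Y,\PP^3}(3))=0$ for general $Y$. In the cohomology of the above sequence with $Z=Y$ the relevant piece is $H^1(\Ii_{Y,\PP^3}(3))\to H^1(W,\Ii_{Y\cap W,W}(3))\to H^2(\Ii_{Y,\PP^3})$. Since $p_a(Y)=0$ forces $h^1(\Oo_Y)=0$ and hence $H^2(\Ii_{Y,\PP^3})\cong H^1(\Oo_Y)=0$, while the left-hand term vanishes, I conclude $h^1(W,\Ii_{Y\cap W,W}(3))=0$.

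For the degree $7$ statement the same sequence with $Z=X$ and $t=3$ gives, using $H^0(\Ii_{X,\PP^3})=0$ and $H^1(\Ii_{X,\PP^3})=0$ (the latter because $X$ is connected, so $H^0(\Oo_{\PP^3})\to H^0(\Oo_X)$ is an isomorphism), a canonical isomorphism $H^0(W,\Ii_{X\cap W,W}(3))\cong H^0(\Ii_{X,\PP^3}(3))$. It therefore suffices to produce a single degree $7$ bamboo contained in no cubic and to use semicontinuity. I would start from the bamboo $T$ of Lemma \ref{be5}: since $h^1(\Ii_{T,\PP^3}(3))=0$ one finds $h^0(\Ii_{T,\PP^3}(3))=1$, so $T$ lies on a unique cubic $F$. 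Attaching to a free end of $T$ a general line $L_7$ through that endpoint yields a degree $7$ bamboo $X_0=T\cup L_7$ with $H^0(\Ii_{X_0,\PP^3}(3))\subseteq H^0(\Ii_{T,\PP^3}(3))=\langle F\rangle$; a general line through a fixed point lies in a fixed cubic only for a proper subfamily of the two-dimensional family of such lines, so $L_7\not\subset\{F=0\}$ and $H^0(\Ii_{X_0,\PP^3}(3))=0$. Semicontinuity then gives $h^0(\Ii_{X,\PP^3}(3))=0$ for general $X\in B(3,7)$, and hence $h^0(W,\Ii_{X\cap W,W}(3))=0$.

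The only delicate point, and the sole place where the full generality of $W$ matters, is the exactness of the residual sequence for a possibly reducible or non-reduced cubic. This rests precisely on $f$ being a nonzerodivisor on $\Oo_Z$; for the reduced Cohen--Macaulay curve $Z$ the associated primes are the generic points of its lines, so the condition is equivalent to no component of $Z$ being contained in the support of $W$, which is exactly the generic position arranged above. Once this is in place the argument never again refers to the geometry of $W$, and both vanishings hold uniformly.
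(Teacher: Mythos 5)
Your proof is correct, and its first half (the $h^1$ statement for $Y$) coincides with the paper's own argument: both use Lemma \ref{be5} plus semicontinuity on the irreducible family $B(3,6)$, the residual exact sequence $0\to\Ii_Y\to\Ii_Y(3)\to\Ii_{Y\cap W,W}(3)\to 0$, and $h^2(\Ii_Y)=h^1(\Oo_Y)=0$. For the $h^0$ statement for $X$ you take a genuinely different, though closely parallel, route. The paper stays on $W$: from $h^1(W,\Ii_{Y\cap W,W}(3))=0$ and the count $h^0(\Oo_W(3))=19$, $\#(Y\cap W)=18$, it gets a unique divisor $D$ with $\{D\}=|\Ii_{Y\cap W,W}(3)|$, then takes $X=Y\cup L$ with $L$ a general line meeting a final line of $Y$ and observes that $L\cap W\nsubseteq D$. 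You instead prove the $W$-independent statement $h^0(\Ii_{X,\PP^3}(3))=0$ for a general $X\in B(3,7)$ --- attaching a general line to the degree $6$ bamboo $T$ of Lemma \ref{be5}, which lies on the unique cubic $F$, and letting the line escape $F$ --- and then transfer this to $W$ via the identification $H^0(W,\Ii_{X\cap W,W}(3))\cong H^0(\Ii_{X,\PP^3}(3))$, valid because $H^0(\Ii_X)=H^1(\Ii_X)=0$ (the latter by connectedness of $X$) and no component of $X$ lies in $W$. The two mechanisms are two sides of the same coin: the paper's $D$ is precisely the trace $F|_W$ of the unique cubic through the degree $6$ curve. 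Your version buys two things: the line-addition step happens entirely in $\PP^3$, so you never need to verify the paper's unargued claim $L\cap W\nsubseteq D$ directly on a possibly reducible or non-reduced $W$; and your explicit justification of the residual sequence (no component of the bamboo in the support of $W$, so the cubic form is a nonzerodivisor on the reduced Cohen--Macaulay curve and the residual scheme is the curve itself) is exactly where the lemma's unusual generality of $W$ enters, a point the paper leaves implicit. The only extra cost, the vanishing $H^1(\Ii_X)=0$, is immediate, so your argument is if anything slightly more robust than the original.
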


\begin{proof}
General $Y$ and $X$ have no irreducible components containing in $W$. Thus the residual exact sequence of $W$ gives the exact
sequence
\begin{equation}\label{eqbe2}
0\to \Ii _Y\to \Ii_Y(3)\to \Ii_{W\cap Y,W}(3)\to 0
\end{equation}
Lemma \ref{be5} gives $h^1(\Ii_Y(3))=0$. Since $h^2(\Ii _Y)=h^1(\Oo _Y)=0$, the long cohomology exact sequence of \eqref{eqbe2}
gives
$h^1(W,\Ii_{W\cap Y,Y}(3))=0$, i.e. $h^0(W,\Ii_{W\cap Y,Y}(3))=1$, say $\{D\}:= |\Ii_{W\cap Y,W}(3)|$. Take as $X$ the union
of $Y$ and a general line $L$ intersecting a final line of $T$. Observe that $L\cap W\nsubseteq D$.
\end{proof}

\begin{remark}\label{eeb2}
Fix a positive integer $d$, a type $\tau$ for degree $d$ trees and a general $Y\in T(3,d,\tau)'$,  and set $S:= Y\cap W$. Thus
$\#S =3d$.

\quad (a) Assume $d=1$. $S$ is formed by $3$ collinear points. Thus $h^1(\PP^3,\Ii_{S,\PP^3}(1)) =1$, $h^0(\PP^3,\Ii
_{S,\PP^3}(1))=2$ and $h^1(\PP^3,\Ii_{S,\PP^3}(t)) =0$ for all $t\ge 2$. Thus $h^1(W,\Ii_{S,\PP^3}(1)) =1$, $h^0(W,\Ii
_{S,W}(1))=2$ and $h^1(W,\Ii_{S,W}(t)) =0$ for all $t\ge 2$.

\quad (b) Assume $d=2$. Thus $S$ is the union of $6$ coplanar points contained in a reducible plane conic and in no other
conic. Thus $h^1(\PP^3,\Ii_{S,\PP^3}(1)) =3$, $h^0(\PP^3,\Ii
_{S,\PP^3}(1))=1$, $h^1(\PP^3,\Ii_{S,\PP^3}(2))=1$, $h^0(\PP^3,\Ii _{S,\PP^3}(2)) =5$  and $h^1(\PP^3,\Ii_{S,\PP^3}(t)) =0$
for all
$t\ge 3$. Thus $h^1(W,\Ii_{S,\PP^3}(1)) =1$, $h^0(W,\Ii _{S,W}(1))=2$, $h^1(W,\Ii_{S,W}(2))=1$, $h^0(W,\Ii
_{S,W}(2)) =5$ and $h^1(W,\Ii_{S,W}(t)) =0$ for all $t\ge 3$.

\quad ({c}) Assume $d=3$. Obviously $h^0(\PP^3,\Ii _{Y,\PP^3}(1))=0$. Since $h^2(\PP^3,\Ii_{Y,\PP^3}))=h^1(\Oo_Y)=0$, the
Castelnuovo-Mumford's lemma gives $h^1(\PP^3,\Ii _{Y,\PP^3}(t))=0$ for all $t\ge 2$. Hence $h^0(\PP^3,\Ii_Y(2))=3$. Obviously
$h^0(W,\Ii_{W\cap Y,W}(2)) \ge 3$. Since $h^1(\PP^3,\Ii_Y(-1))=0$, the residual exact sequence of $W$ gives $h^0(W,\Ii_{W\cap
Y,W}(2)) =3$. Thus $h^1(W,\Ii_{W\cap Y,W}(2)) =2$. Since $h^1(\PP^3,\Ii_{Y,\PP^3})=0$, the residual exact sequence of $W$
gives $h^0(W,\Ii_{Y\cap W,W}(3)) =h^0(\PP^3,\Ii_{Y,\PP^3}(3)) =10$. Thus $h^1(W,\Ii_{Y\cap W,W}(3)) =0$. The
Castelnuovo-Mumford's lemma gives $h^1(W,\Ii_{Y\cap W,W}(3)) =0$.

\quad (d) Assume $d=4$. Lemma \ref{be4} gives $h^0(\PP^3,\Ii_{Y,\PP^3}(2))=1$. Since $h^1(\PP^3,\Ii_{Y,\PP^3}(-1))=0$, the
residual exact sequence of $W$ gives $h^0(W,\Ii_{Y\cap W,W}(2))=1$. Thus we have $h^1(W,\Ii_{Y\cap W,W}(2))=3$.

\end{remark}

\begin{lemma}\label{eeb3}
$H(3)$ is true.
\end{lemma}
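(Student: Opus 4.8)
The plan is to reduce $H(3)$ directly to the vanishing already established for degree $6$ bamboos. First I would record the numerical input. Since $h^0(\Oo_W(3)) = \binom{6}{3} - \binom{3}{3} = 19$, we get $x_3 = \lfloor 19/3\rfloor = 6$, so $H(3)$ asks for a type $\tau$ for degree $6$ trees such that a general $Y \in T(3,6,\tau)'$ satisfies $h^1(W, \Ii_{W\cap Y}(3)) = 0$. Equivalently, as noted just after the statement of $H(t)$, it asks that $h^0(W, \Ii_{W\cap Y}(3)) = 1$; this is exactly the expected value, since $\#(W\cap Y) = 18 = h^0(\Oo_W(3)) - 1$ and $d = 6 = x_3$, matching the dichotomy of Remark \ref{eeb1}.

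The natural candidate for $\tau$ is the bamboo type $\tau(i) = i-1$ for degree $6$ trees, so that $T(3,6,\tau)' = B(3,6)'$. With this choice $H(3)$ becomes precisely the $h^1$-assertion of Lemma \ref{be6}: for a general degree $6$ bamboo $Y$ one has $\dim(Y\cap W) = 0$ and $h^1(W, \Ii_{Y\cap W, W}(3)) = 0$. By the equivalence recalled after the definition of $H(t)$, together with semicontinuity (it suffices to produce a single transversal $Y$ of the chosen type with the vanishing), this is exactly $H(3)$. So I would simply invoke Lemma \ref{be6} to conclude.

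Consequently there is essentially no obstacle remaining at this point: the real content has been front-loaded into Lemmas \ref{be5} and \ref{be6} (the construction of a degree $6$ bamboo with $h^1(\Ii_T(3))=0$ and the passage through the residual sequence of $W$). The only items to verify here are the elementary count giving $x_3 = 6$, and the observation that a bamboo is a tree of the indicated type, so that a general degree $6$ bamboo furnishes a general member of $T(3,6,\tau)'$ transversal to $W$. The hardest genuine step, the $h^1$-vanishing for the degree $6$ bamboo, is precisely what Lemma \ref{be6} supplies, so I would cite it directly rather than reprove it.
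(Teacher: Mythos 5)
Your proof is correct and is essentially the paper's argument: the paper's own proof is just ``Note that $x_3=6$'' followed by a citation (nominally to Lemma \ref{be4}, evidently a slip for the degree-$6$ bamboo results, since Lemma \ref{be4} concerns degree-$4$ trees in degree $2$), and your direct invocation of Lemma \ref{be6} with the bamboo type, plus the check $x_3=6$ and the semicontinuity/transversality remarks, is exactly the intended chain of reasoning, with the real work front-loaded into Lemmas \ref{be5} and \ref{be6} as you say.
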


\begin{proof}
Note that $x_3=6$. Apply Lemma \ref{be4}.
\end{proof}

\begin{lemma}\label{eeb4}
Fix an integer $t\ge 5$ and assume $H(t-2)$. Then $H(t)$ is true.
\end{lemma}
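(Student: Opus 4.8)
The plan is to run the Horace (residual) method on $W$ with respect to the Cartier divisor $D=Q\cap W\in|\Oo_W(2)|$, where $Q$ is the general smooth quadric fixed at the start of the section, so that $D$ is the general canonically embedded genus $4$ curve introduced there, with $\omega_D=\Oo_D(1)=\Oo_Q(1,1)|_D$. For a tree $Y\subset\PP^3$ transversal to $W$ with no component in $W$, the residual exact sequence of $D$ on $W$ reads
\begin{equation*}
0\to\Ii_{\Res_D(W\cap Y),W}(t-2)\to\Ii_{W\cap Y,W}(t)\to\Ii_{\op{Tr}_D(W\cap Y),D}(t)\to 0,
\end{equation*}
where $\op{Tr}_D$ denotes the points of $W\cap Y$ on $D$ and $\Res_D$ the remaining ones. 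By the long exact sequence, $H(t)$ (i.e. $h^1(W,\Ii_{W\cap Y}(t))=0$) follows as soon as $h^1(W,\Ii_{\Res_D(W\cap Y)}(t-2))=0$ and $h^1(D,\Ii_{\op{Tr}_D(W\cap Y),D}(t))=0$. I would secure the first vanishing from $H(t-2)$ and the second by a direct computation on $D$.

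Concretely, I would fix a solution $Y_0\in T(3,x_{t-2},\tau_0)'$ of $H(t-2)$, general in its type, so $W\cap Y_0$ is $3x_{t-2}$ points lying off $D$ with $h^1(W,\Ii_{W\cap Y_0}(t-2))=0$. To it I attach a degree $2t-1$ \emph{star} $T\subset Q$: a center $R_0\in|\Oo_Q(1,0)|$ together with $2t-2$ general rays $S_1,\dots,S_{2t-2}\in|\Oo_Q(0,1)|$, so $T\in|\Oo_Q(1,2t-2)|$ is a tree on $Q$. Choosing $S_1$ to pass through one of the two points of $L\cap Q$, for a final line $L$ of $Y_0$, this point lies off $W$ for general $Y_0$, so the node $S_1\cap L$ is off $W$ and $Y:=Y_0\cup T$ is a degree $x_t=x_{t-2}+(2t-1)$ tree transversal to $W$. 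Since $T\subset Q$ we have $W\cap T=T\cap D$, a reduced divisor of degree $6t-3$ on $D$ in the class $\Oo_Q(1,2t-2)|_D$; hence $\op{Tr}_D(W\cap Y)=T\cap D$ and $\Res_D(W\cap Y)=W\cap Y_0$. The first vanishing is then exactly $H(t-2)$, and (using the semicontinuity criterion recorded just after the definition of $H(t)$, whereby exhibiting one such $Y$ suffices) the lemma is reduced to
\begin{equation*}
h^1\bigl(D,\Ii_{T\cap D,D}(t)\bigr)=h^1\bigl(D,\Oo_Q(t-1,2-t)|_D\bigr)=0.
\end{equation*}
As $\deg(T\cap D)=6t-3=h^0(\Oo_D(t))$, this is equivalent to $h^0(D,\Oo_Q(t-1,2-t)|_D)=0$, and by Serre duality (via $\omega_D=\Oo_Q(1,1)|_D$) to the non-effectivity $h^0(D,\Oo_Q(2-t,t-1)|_D)=0$ of a degree $3$ bundle on the genus $4$ curve $D$.

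This last vanishing is the main obstacle, reflecting the fact that the trace points are forced onto rulings of $Q$ rather than being general on $D$. I would handle it as follows. Twisting $0\to\Oo_Q(-1-t,t-4)\to\Oo_Q(2-t,t-1)\to\Oo_Q(2-t,t-1)|_D\to 0$ and using $h^0(\Oo_Q(2-t,t-1))=0$, the space $H^0(D,\Oo_Q(2-t,t-1)|_D)$ is the kernel of multiplication by the bidegree $(3,3)$ equation $F$ of $D$,
\begin{equation*}
\cdot F\colon H^1\bigl(\Oo_Q(-1-t,t-4)\bigr)\to H^1\bigl(\Oo_Q(2-t,t-1)\bigr),
\end{equation*}
and by K\"unneth both spaces have dimension $t(t-3)$, so it is enough to show $\cdot F$ is injective, hence an isomorphism. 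Since effectivity is a closed condition and $D$ may be taken general (as $W$, and so $Q\cap W$, is general), it suffices to exhibit one smooth $D\in|\Oo_Q(3,3)|$ with $\cdot F$ injective; equivalently, writing $A=\Oo_Q(1,0)|_D$ and $B=\Oo_Q(0,1)|_D$ for the two $g^1_3$'s on $D$ with $A+B=K_D$, one checks that the degree $3$ class $\Oo_Q(2-t,t-1)|_D=A+(t-1)(B-A)$ avoids the theta divisor $W_3\subset\mathrm{Pic}^3(D)$ for $t\ge 3$. I expect the cleanest route to be verifying injectivity of $\cdot F$ for a single convenient (e.g. suitably reducible or monomial) $F$, the general case then following by semicontinuity.
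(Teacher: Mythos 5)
Your geometric construction coincides with the paper's proof of this lemma: the paper likewise takes a solution $Y$ of $H(t-2)$ moved so that $Y\cap W$ misses $D=Q\cap W$, attaches a degree $2t-1$ comb $E\in|\Oo_Q(1,2t-2)|$ through a point of $Y\cap Q$ (the paper attaches along the $(1,0)$-component rather than along a ruling line through a point of $L\cap Q$, which is immaterial), and uses the residual exact sequence of the divisor $D\in|\Oo_W(2)|$ to reduce $H(t)$ to $h^0(D,\Ii_{E\cap D,D}(t,t))=h^0(D,\Oo_D(t-1,2-t))=0$, i.e.\ to the non-effectivity of a degree $3$ line bundle $(t-1)A+(2-t)B$ on the general genus $4$ curve $D$, where $A,B$ are the two $g^1_3$'s. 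Up to and including this reduction your argument is correct and is the paper's (your degree count $6t-3=h^0(\Oo_D(t))$, the Serre-duality flip to $\Oo_D(2-t,t-1)$, the K\"unneth count $t(t-3)$, and the identification of $H^0(D,\Oo_D(2-t,t-1))$ with $\ker(\cdot F)$ all check out).

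The genuine gap is the final vanishing, which is the crux of the lemma. The paper disposes of it by the generality of $D$ in $|\Oo_Q(3,3)|$ together with the strong Franchetta conjecture as proved by Mestrano and Kouvidakis, which settles all $t$ at once; you instead propose to exhibit one ``convenient'' $F$ with $\cdot F\colon H^1(\Oo_Q(-1-t,t-4))\to H^1(\Oo_Q(2-t,t-1))$ injective and conclude by semicontinuity, but you never produce such an $F$, and the candidates you name fail. A monomial $F=x_0^3y_0^3$ fails for every $t\ge 4$: in the \v{C}ech basis, $\cdot x_0^3$ kills the classes $x_0^{-i}x_1^{i-t-1}$ with $i\le 3$, so $\ker(\cdot F)$ has dimension $3(t-3)>0$. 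The symmetric binomial $x_0^3y_0^3+x_1^3y_1^3$ is injective exactly when $t\equiv 0\pmod 3$: tracking the chains $(i,k)\mapsto (i+3,k-3)$ on the basis $x_0^{-i}x_1^{i-t-1}\otimes y_0^ky_1^{t-4-k}$ one finds surviving kernel vectors whenever $t\equiv 1,2\pmod 3$ (e.g.\ for $t=5$ the classes $x_0^{-3}x_1^{-3}\otimes y_0^ky_1^{1-k}$ map to zero). Since injectivity of $\cdot F$ for general $F$ is \emph{equivalent} to the statement being proved, semicontinuity does not by itself produce a witness, and your alternative formulation (that $A+(t-1)(B-A)$ avoids $W_3\subset\mathrm{Pic}^3(D)$) is only a restatement. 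As written, the key step is asserted (``I expect''), not proved; to close the argument you would need either to exhibit and verify an explicit $F$ uniformly in $t$ (or per congruence class of $t$), or to invoke a Franchetta-type result as the paper does.
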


\begin{proof}
Fix a solution $Y\in T(3,x_{t-2},\tau)'$. Moving $Y$ we may assume that $Y$ is transversal to $Q$, that no line of $Q$ is
secant to $Y$ and that $Q\cap D=\emptyset$. We fix
$o\in Y\cap Q$ and take $E\in |\Oo_Q(1,2t-2)|$ formed by $2t-1$ distinct lines with the component in $|\Oo_Q(1,0)|$ containing
$o$ and all other ones not intersecting $Y\cap Q$. Thus $Y\cup E$ is a degree $x_t$ tree. The residual exact sequence of $D$ in
$W$ shows that to prove $H(t)$ it is sufficient to prove that $h^0(D,\Ii _{E\cap D,D}(t,t))=0$. The line bundles $\Oo_D(1,0)$ and
$\Oo _D(0,1)$ are the two $g^1_3$'s on the general genus $4$ curve. Thus it is sufficient to observe that
$h^0(D,\Oo_D(t-1,-t+2))=0$ by the generality of $D$ and the strong Franchetta conjecture (\cite[Theorem 2]{kou}, \cite{mes}).
\end{proof}

\begin{lemma}\label{efb1}
$H(4)$ is true.
\end{lemma}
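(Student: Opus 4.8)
Assertion $H(4)$ concerns a general $Y\in T(3,x_4,\tau)'$ with $x_4=10$, and for any transversal degree $10$ tree $Y$ the scheme $Z:=W\cap Y$ consists of $30$ points, so
\[
h^0(W,\Ii_{Z,W}(4))-h^1(W,\Ii_{Z,W}(4))=h^0(\Oo_W(4))-30=31-30=1 .
\]
Thus $h^0(W,\Ii_{Z,W}(4))\ge 1$ always, and $H(4)$ is \emph{equivalent} to the single inequality $h^0(W,\Ii_{Z,W}(4))\le 1$. By the semicontinuity remark following the definition of $H(t)$, it therefore suffices to exhibit one transversal degree $10$ tree with this property. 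Note that one cannot simply imitate the inductive step of Lemma \ref{eeb4}: that step reduces level $t$ to level $t-2$, and here level $2$ is genuinely bad, since no transversal tree achieves $h^1(W,\Ii_{W\cap Y,W}(2))=0$ (by Remark \ref{eeb2} a degree $3$ tree already has $h^1(\cdot\,(2))=2$, and any larger tree gives $3d>10=h^0(\Oo_W(2))$).

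The plan is to build the tree as in Lemma \ref{eeb4} but from a degree $5$ base. Fix the smooth quadric $Q$ and its canonical genus $4$ section $D=Q\cap W$. Take a general degree $5$ bamboo $Y_0$ as in Lemma \ref{be4.1}, so that $h^0(W,\Ii_{W\cap Y_0,W}(2))=0$, and move it so that it is transversal to $Q$ with $Y_0\cap Q$ disjoint from $W$. Pick $o\in Y_0\cap Q$ and let $E\in|\Oo_Q(1,4)|$ be the connected union of five lines (the ruling line through $o$ together with four lines of the other ruling), so that $Y_0\cup E$ is a transversal degree $10$ tree. Since $E\cap D$ is the divisor $\Oo_D(1,4)$ and $\Oo_W(D)=\Oo_W(2)$, the residual exact sequence of $D$ in $W$ for $Z:=W\cap(Y_0\cup E)$ reads
\[
0\to \Ii_{W\cap Y_0,W}(2)\to \Ii_{Z,W}(4)\to \Oo_D(3,0)\to 0 .
\]
Here $\Oo_D(3,0)$ is the cube of a $g^1_3$, of degree $9$ and nonspecial (because $K_D=\Oo_D(1,1)$, whence $h^1(\Oo_D(3,0))=h^0(\Oo_D(-2,1))=0$), so $h^0(\Oo_D(3,0))=6$; on the other side $h^0(W,\Ii_{W\cap Y_0,W}(2))=0$ and $h^1(W,\Ii_{W\cap Y_0,W}(2))=15-10=5$. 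Consequently $h^0(W,\Ii_{Z,W}(4))=\ker\bigl(\delta\colon H^0(\Oo_D(3,0))\to H^1(W,\Ii_{W\cap Y_0,W}(2))\bigr)$, and the desired bound $h^0\le 1$ becomes exactly the surjectivity of the connecting map $\delta$ from a $6$-dimensional onto a $5$-dimensional space.

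The main obstacle is precisely this surjectivity: because the left-hand $h^1$ does not vanish, the conclusion is not formal and requires genuinely new input (this is why $H(4)$ must be treated separately from the even cases $t\ge 6$). I would establish $\mathrm{rank}\,\delta=5$ by a further degeneration of the five lines of $E$, equivalently of the $15$ points $E\cap D$ along the rulings of $Q$, to a position in which the $6\times 5$ matrix of $\delta$ is triangular and visibly of full rank, and then pass back to a general such tree by semicontinuity. As a cross-check one may realise the same count through a general plane $H$ instead: attaching the degree $7$ bamboo $X$ of Lemma \ref{be6} (with $h^0(W,\Ii_{W\cap X,W}(3))=0$, hence $h^1=2$) to three general lines of $H$, one gets $Z\cap C$ equal to $9$ points on the plane cubic $C=H\cap W$ with $\Ii_{Z\cap C,C}(4)=\Oo_C(1)$, and $H(4)$ reduces to surjectivity of the analogous map $H^0(\Ii_{Z\cap C,C}(4))\to H^1(W,\Ii_{W\cap X,W}(3))$ from a $3$-dimensional onto a $2$-dimensional space. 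In either formulation the crux is to show that the auxiliary points on $D$ (resp.\ on $C$) impose one fewer condition than naively expected on the obstruction space, and I expect verifying this rank count to be the most delicate step.
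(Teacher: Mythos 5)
Your setup is correct ($x_4=10$, $\chi=31-30=1$, so $H(4)$ amounts to exhibiting one transversal degree $10$ tree $Z$-configuration with $h^0(W,\Ii _{Z,W}(4))\le 1$), and you correctly diagnosed why the inductive step of Lemma \ref{eeb4} cannot be run from level $t-2=2$. But your argument has a genuine gap at exactly the point you flag yourself: with your split $Y_0\cup E$, $Y_0$ a degree $5$ bamboo as in Lemma \ref{be4.1} and $E\in |\Oo _Q(1,4)|$, the residual sequence of $D$ leaves you needing the connecting map $\delta \colon H^0(\Oo _D(3,0))\to H^1(W,\Ii _{W\cap Y_0,W}(2))$ to have rank $5$, and this is never proved. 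Saying you would degenerate the $15$ points of $E\cap D$ until "the $6\times 5$ matrix of $\delta$ is triangular and visibly of full rank" is a plan, not an argument; it is essentially as hard as $H(4)$ itself, since one must control a $5$-dimensional obstruction space attached to the fixed general $W$ while keeping the configuration a nodal, transversal tree. Your cross-check via a plane has an additional flaw: three general lines of a plane $H$ pairwise intersect, so $X$ together with three lines of $H$ contains a cycle and has arithmetic genus at least $1$ --- it is not a tree, hence not an admissible configuration for $H(4)$ at all.

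The paper's proof avoids any rank computation by choosing the decomposition so that \emph{both} relevant groups vanish, at the cost of dropping connectivity at the intermediate level: it takes $E$ to be three pairwise \emph{disjoint} general lines, for which $h^0(\PP^3,\Ii _{E,\PP^3}(2))=1$, hence $h^1(\PP^3,\Ii _{E,\PP^3}(2))=0$, and the residual sequence of $W$ gives $h^1(W,\Ii _{E\cap W,W}(2))=0$ with $h^0(W,\Ii _{E\cap W,W}(2))=1$ ($9$ points against $h^0(\Oo _W(2))=10$). It then attaches $F\in |\Oo _Q(1,6)|$, a connected union of $7$ lines with three of the $(0,1)$-ruling lines meeting the three components of $E$, producing a degree $10$ tree, and concludes as in Lemma \ref{eeb4}: the residual piece on $D$ is $\Oo _D(3,-2)$, which has $h^0=0$ by the strong Franchetta conjecture, so $h^0(W,\Ii _{Z,W}(4))\le 1+0=1$ is formal. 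The idea you missed is that the intermediate configuration need not be connected --- connectivity can be restored by the lines on $Q$ --- which lets one trade your pair $(h^1=5,\ h^0=6)$ for $(h^1=0,\ h^0=0)$ and removes the delicate surjectivity question entirely.
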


\begin{proof}
We have $x_4 = 10$. Let $A\subset \PP^3$ be a general union of $3$ lines. We have $h^0(\PP^3,\Ii
_{E,\PP^3}(2)) =1$ (easy, take $E\subset Q$ or apply \cite{hh}) and hence $h^1(\PP^3,\Ii_{E,\PP^3}(2))=0$. Since
$h^1(\Oo_{\PP^1}(-1)) =0$. Thus $h^2(\PP^3,\Ii_{E,\PP^3}(-1)) =h^1(\Oo_E(-1))=0$. The residual exact sequence of $W$ gives
$h^1(W,\Ii_{E\cap W,W}(2))=0$. We take $F\in |\Oo_Q(1,6)|$ union of $7$ lines, $3$ of the one of the ruling $|\Oo_Q(0,1)|$
meeting a different line of $E$. Thus $E\cup F$ is a tree. Then we continue as in the proof of Lemma \ref{eeb4}.
\end{proof}

\begin{proof}[Proof of Theorem \ref{be2}:]
We discussed in Remark \ref{eeb2} why the listed case are exceptional and the amount of maximal rank failure for each of these
cases.
Thus to prove the theorem we may assume $d\ge 5$. For $d=5$ use Lemma \ref{be4.1}. For $d=6$ use Lemma \ref{be6}. 
From now on we assume $d\ge 7$.

By Remark \ref{eeb2} it is sufficient to prove that for all $d\ge 6$ the set $W\cap Y$ has maximal rank in $W$ for some $\tau$
and a general
$Y\in T(3,d,\tau)'$. Let $t$ be the minimal integer such that $x_t\ge d$. Thus $x_{t-1}<d\le x_t$. Since $H(t)$ is true, there
is a type
$\gamma$ for degree $x_t$ trees such that $h^0(W,\Ii_{T\cap W}(t)) =1$ and $h^1(W,\Ii _{T\cap W}(t))=0$ for a general $T\in
T(3,x_t,\gamma)$. Thus
$h^1(W,\Ii _{T\cap W}(x))=0$ for all
$x\ge t$ by the Castelnuovo-Mumford's lemma. Thus $h^1(W,\Ii_{W\cap E,W}(x)) =0$ for all $x\ge t$ and all curves $E\subseteq
T$. To conclude it is sufficient to prove the existence of a degree $d$ connected curve $E\subseteq T$ such that $h^0(W,\Ii
_{W\cap E,W}(t-1)) =0$. We need the proof of Lemma \ref{eeb4}. We obtained $Y$ starting from a solution $Y'$ of $H(t-2)$
and adding $2t-1$ lines contained in $Q$. Adding $2t-1-x_t+d$ lines we get  $h^0(W,\Ii
_{W\cap (Y'\cup E'),W}(t-1)) =0$, because $d>x_{t-1}$.
\end{proof}

\providecommand{\bysame}{\leavevmode\hbox to3em{\hrulefill}\thinspace}

\end{document}